\newtheorem{theorem}{Theorem}[section]
\newtheorem{proposition}[theorem]{Proposition}
\newtheorem{lemma}[theorem]{Lemma}
\newtheorem{corollary}[theorem]{Corollary}
\newtheorem{definition}[theorem]{Definition}
\newtheorem{question}[theorem]{Question}
\newcommand{\bbz}{{\mathbb Z}}
\newcommand{\crk}{${\mathcal R}_4(K)$}
\newcommand{\rk}{{\mathcal R}_4(K)}
\newcommand{\sK}{{\mathcal S}_4(K)}
\newcommand{\crl}{${\mathcal R}_4(L)$}
\newcommand{\rl}{{\mathcal R}_4(L)}
\newcommand{\rlp}{{\mathcal R}_4(L^\prime)}
\newcommand{\ssk}{\smallskip}
\newcommand{\msk}{\medskip}
\newcommand{\ctln}{\centerline}
\begin{document}

\title{4-moves and the Dabkowski-Sahi invariant for knots}

\author[M. Brittenham]{Mark Brittenham}

\address{Department of Mathematics\\
        University of Nebraska\\
         Lincoln NE 68588-0130, USA}
\email{mbrittenham2@math.unl.edu}

\author[S. Hermiller]{Susan Hermiller}
\address{Department of Mathematics\\
        University of Nebraska\\
         Lincoln NE 68588-0130, USA}
\email{smh@math.unl.edu}

\author[R. Todd]{Robert Todd}
\address{Department of Mathematics\\
        University of Nebraska\\
         Omaha NE 68182-2000, USA}
\email{rtodd@unomaha.edu}

\begin{abstract}
We study the 4-move invariant \crl\ for links in the 3-sphere developed by Dabkowski and Sahi, 
which is defined as a quotient of the fundamental group of the link complement. We develop 
techniques for computing this invariant and show that for several classes of knots it is
equal to the invariant for the unknot; therefore, in these cases the invariant cannot  detect 
a counterexample to the 4-move conjecture.
\end{abstract}

\maketitle
\section{Introduction}\label{sec:intro}

Studying the equivalence classes of knots and links under various types of transformations
on their diagrams
is a well-established subdiscipline of knot theory. This paper concerns
the 4-move, first systematically studied by 
Nakanishi \cite{nakanishi}. The 4-move belongs to the family of $n$-moves,
which involve inserting or deleting $n$ half-twists in series (see
Figure~\ref{fig:4move}), 
and is the only move in the family
whose status as an unknotting operation has not yet been determined.

\ssk

\begin{theorem}\label{thm:basic} \cite{dabprz02},\cite{dabprz11} 
An $n$-move is an unknotting operation, 
i.e., every knot can be transformed to the unknot/unlink by isotopy
and $n$-moves, if $n=1,2$. An $n$-move is not an unknotting operation
if $n=3$ or $n\geq 5$.
\end{theorem}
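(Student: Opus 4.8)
The statement splits into a positive part ($n\in\{1,2\}$) and a negative part ($n=3$ and $n\ge 5$), and I would treat these by entirely different methods. For the positive part the plan is to reduce to the classical fact that the crossing change is an unknotting operation: given any diagram $D$ of a knot $K$, fix a basepoint and orientation and change crossings so that $D$ becomes descending; a descending diagram is unknotted. So it suffices to realize a single crossing change by a sequence of $1$- or $2$-moves together with planar isotopy and Reidemeister moves. For $n=2$ one checks that inside a small disk a crossing change is produced by a single $\pm 2$-move, by replacing the integer tangle with one crossing by the integer tangle with $-1$ crossings (equivalently, resolving a clasp into two parallel strands and reintroducing a Reidemeister~II poke of the opposite sign). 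For $n=1$, two consecutive $1$-moves of the same sign at the same site change an integer $k$-tangle to a $(k\pm 2)$-tangle, so the identical clasp-resolution argument applies; hence every knot reduces to the unknot under $1$-moves, and under $2$-moves.

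For the negative part the plan is to produce, for each relevant $n$, a knot $K$ together with an invariant of $n$-move equivalence that separates $K$ from every trivial link. I would build such an invariant in two stages. \emph{Stage~1 (Fox colorings).} For a diagram, label each arc by an element of $\mathbb{Z}/m$ subject to $z=2x-y$ at every crossing (over-arc $x$, under-arcs $y,z$), and let $\mathrm{Col}_m(L)$ be the number of such labelings. A direct computation in a twist region shows that inserting $n$ half-twists changes the two outgoing labels by $n(a-b)$ times the all-ones vector, where $a,b$ are the incoming labels; hence modulo any $m\mid n$ this change vanishes and $\mathrm{Col}_m$ is an $n$-move invariant for every divisor $m$ of $n$. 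Moreover an $n$-move preserves the number of components when $n$ is even (the integer $n$-tangle and the $0$-tangle induce the same endpoint pairing), so in that case a knot can only be $n$-move equivalent to the unknot. Combining these, whenever $n$ is even with an odd prime factor $p$ (so $n=6,10,12,14,18,\dots$) the trefoil, or the torus knot $T_{2,p}$, already has $\mathrm{Col}_p>p=\mathrm{Col}_p(\text{unknot})$ and is therefore not $n$-move trivial, settling all those cases.

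\emph{Stage~2 (Burnside groups).} For the remaining cases --- $n=3$, $n$ an odd prime, and $n$ a power of $2$ (at least $8$) --- Stage~1 is useless: for $n$ prime $\mathrm{Col}_p(K)$ is always a power of $p$, exactly as for trivial links, and for $n=2^k$ one checks $\mathrm{Col}_n$ is constant on knots. Here I would pass to the invariant of Dabkowski and Przytycki, namely the $n$-th Burnside quotient $B_n(L)=\pi_1(\Sigma_2(L))/N$, where $N$ is the normal subgroup generated by all $n$-th powers and $\Sigma_2(L)$ is the double branched cover. The key lemma to prove is that $B_n$ is an $n$-move invariant: an $n$-move changes a presentation of $\pi_1(\Sigma_2(L))$ by multiplying one relator by an $n$-th power of a meridian, which is redundant once all $n$-th powers are killed. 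Since $\pi_1(\Sigma_2)$ of the $c$-component trivial link is free of rank $c-1$, one is then reduced to exhibiting a knot $K$ whose Burnside group $B_n(K)$ is not isomorphic to the free Burnside group $B(c-1,n)$ for any $c$. For $n=3$ this can be forced using the classical structure of $B(r,3)$ (finite, nilpotent of class $\le 3$, of known order): one needs a link whose $B_3$ violates one of those constraints. For $n$ a power of $2$ or an odd prime one instead passes to suitable finite computable quotients of $B_n(K)$ and of $B(r,n)$ and compares them, following the same references.

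The main obstacle is the final step of Stage~2: carrying out the word computations in $B(r,n)$ (or in its relevant quotients) that certify a given knot's Burnside group is genuinely different from every free Burnside group. This is where one must invoke the commutator-collection machinery for Burnside groups, and --- for $n\ge 5$, where no clean finiteness theorem for $B(r,n)$ is available --- the special auxiliary quotients constructed by Dabkowski and Przytycki. Everything else (the descending-diagram argument, the tangle computation behind color invariance, the presentation surgery for $\Sigma_2$, and the free-rank bookkeeping for trivial links) is either classical or a short diagrammatic or group-theoretic verification.
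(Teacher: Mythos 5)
The paper does not actually prove this statement: Theorem~\ref{thm:basic} is quoted as background and attributed entirely to \cite{dabprz02} and \cite{dabprz11}, so there is no internal proof to compare yours against; I can only measure your sketch against the arguments in those references. Your positive part ($n=1,2$) is correct and standard: a crossing change is realized by a single $\mp2$-move below an existing crossing (or by two $1$-moves), and descending diagrams are unknotted. Your Stage~1 is also essentially right --- the twist-region computation does give a shift of both outgoing labels by $n(b-a)$, so $\mathrm{Col}_p$ is an $n$-move invariant for $p\mid n$, and parity of $n$ controls the component count --- and this correctly disposes of even $n$ with an odd prime factor via $T_{2,p}$.

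There are, however, three concrete gaps in the negative part. First, your case split omits odd composite $n$ ($9,15,25,\dots$): these are neither ``even with an odd prime factor'' nor odd primes nor powers of $2$. The fix is the divisibility reduction you never state: an $n$-move is a composition of $n/m$ consecutive $m$-moves for any $m\mid n$, so $n$-move triviality implies $m$-move triviality, and a counterexample for $m=3$ or $m=5$ automatically serves for all multiples; with this observation it genuinely suffices to treat odd primes and powers of~$2$. Second, the theorem is phrased for \emph{knots}, but the Burnside counterexamples of Dabkowski--Przytycki for $n=3$ (e.g.\ the closure of $(\sigma_1\sigma_2\sigma_3\sigma_4)^{10}$) are links with several components; you need the additional (easy) step that odd moves can merge components, so any link counterexample is $n$-move equivalent to a knot counterexample. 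Third, and most seriously, the entire burden of Stage~2 --- certifying that $B_n(K)$ is not isomorphic to any free Burnside group $B(r,n)$, which for $n\ge 5$ involves infinite groups and requires passing to carefully chosen nilpotent or solvable quotients --- is acknowledged but not supplied; as written this is an outline of the Dabkowski--Przytycki strategy rather than a proof, with the decisive computation black-boxed into the very references the paper cites.
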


In 1979 Nakanishi \cite{nakan84},\cite[Problem~1.59~(3)(a)]{kirby} 
conjectured that the 4-move is an unknotting
operation. The conjecture remains open, though it has been verified for several
classes of knots, including 2-bridge knots \cite{przytycki}, 3-braids \cite{przytycki}, 
and all knots with 12 or fewer
crossings \cite{dabjabkhasah}. Because of the theorem above, there is a growing belief 
that the conjecture is false. In fact, a leading candidate for a counterexample has emerged \cite{ask}.

The search for a counterexample to the 4-move conjecture has focused
on constructing  knot and link invariants that are also invariant under 4-moves.
Much of this work has used the fundamental group of the link complement,
or of closely related spaces.

In \cite{dabkowskisahi}, Dabkowski and Sahi define an invariant of a link $L$ in the 3-sphere, \crl ,
which is invariant under 4-moves. 
This invariant \crl\ is a quotient of the fundamental
group of the complement of $L$, $\pi(K)=\pi_1({\mathbb S}^3\setminus L)$, obtained by 
adding relations to a Wirtinger presentation of the link group. 
When $L$ is the unknot then
\crl$\cong \bbz$; thus a counterexample to the 4-move conjecture can be found by finding 
a knot $K$ with \crk $\ncong \bbz$. In what follows we say that a 4-move invariant is 
{\em trivially valued} for a knot $K$, if the invariant for the knot $K$ takes the same 
``value" that the invariant takes on the unknot. That is, we will find a counterexample 
to the 4-move conjecture when we find an invariant and a knot for which the 
invariant is \textit{not} trivially valued.

\begin{figure}
\ctln{\includegraphics[width=3in,height=2in]{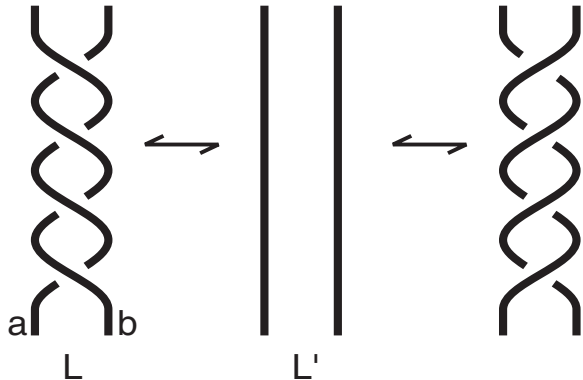}}
\caption{The 4-move}\label{fig:4move}
\end{figure}

In this paper we define a new 4-move invariant for a knot $K$, $\sK$, 
as a quotient of \crk.  
We show that this new invariant is equally as strong a tool when looking for a 
counterexample to the 
4-move conjecture, in the following.

\ssk

\noindent {\bf Corollary \ref{cor:iff}.} {\em  
The following are equivalent: }
\begin{eqnarray*}
 (1) &&\rk\cong\bbz; \mbox{ i.e., } \rk \mbox{\em is trivially valued} \\
 (2) &&\sK \mbox{\em  is finite}\\
 (3) &&\sK \mbox{\em  is abelian}\\
 (4) &&\sK\cong\bbz_2; \mbox{ i.e., } \sK \mbox{\em  is trivially valued.}
 \end{eqnarray*}

\ssk

Analysis of the invariant $\sK$ is more
tractable than the group $\rk$.  For any knot $K$ 
the group $\sK$ 
is a quotient of a Coxeter group, namely a group
generated by finitely many involutions, such that
any pair of the generators generates the dihedral
group of order 8.  Analyzing subgroups generated 
by three of these involutions leads to the following.

\ssk

\noindent {\bf Theorem \ref{thm:bridge3}.}
{\em If $K$ is a knot with bridge number 3, then $\sK$ (and thus \crk)
is trivially valued. More generally,
if $\pi(K)$ is generated by 3 meridians, then $\sK$ and \crk\ are trivially valued.}

\ssk

Another advantage to $\sK$ is that it is more
amenable to algorithmic methods than $\rk$;
computational software 
is often able to determine 
when the invariant $\sK$ is trivially valued, in cases
that the corresponding computations 
applied to \crk\ fail. 
In fact, via computer calculations, we determined that, 
for at least 99.9\% of the 489,107,644  
alternating knots with twenty crossings or less, 
$\sK$, and thus also the Dabkowski-Sahi invariant 
\crk, is trivially valued.

\ssk

\noindent {\bf Theorem \ref{thm:alt18}.}
{\em Among the alternating knots $K$ with up to 20 crossings,
$\sK$ (and thus \crk) is trivially valued, except possibly for 1 knot with 15 crossings, 4 knots with 16 crossings,
41 knots with 17 crossings, and 173 knots with 18 crossings,  31,612 knots with 19 crossings, 
and 274,217 knots with 20 crossings.}

\ssk

The first 5 of these
knots are shown in Figure~\ref{fig:smallest}; 
Gauss codes for those with 17 or fewer crossings can 
be found in an appendix
at the end of the paper. The Gauss codes for the remaining 18, 19, and 20 crossing knots 
can be obtained by contacting the authors. 

\begin{figure}
\ctln{\includegraphics[width=4.5in,height=3in]{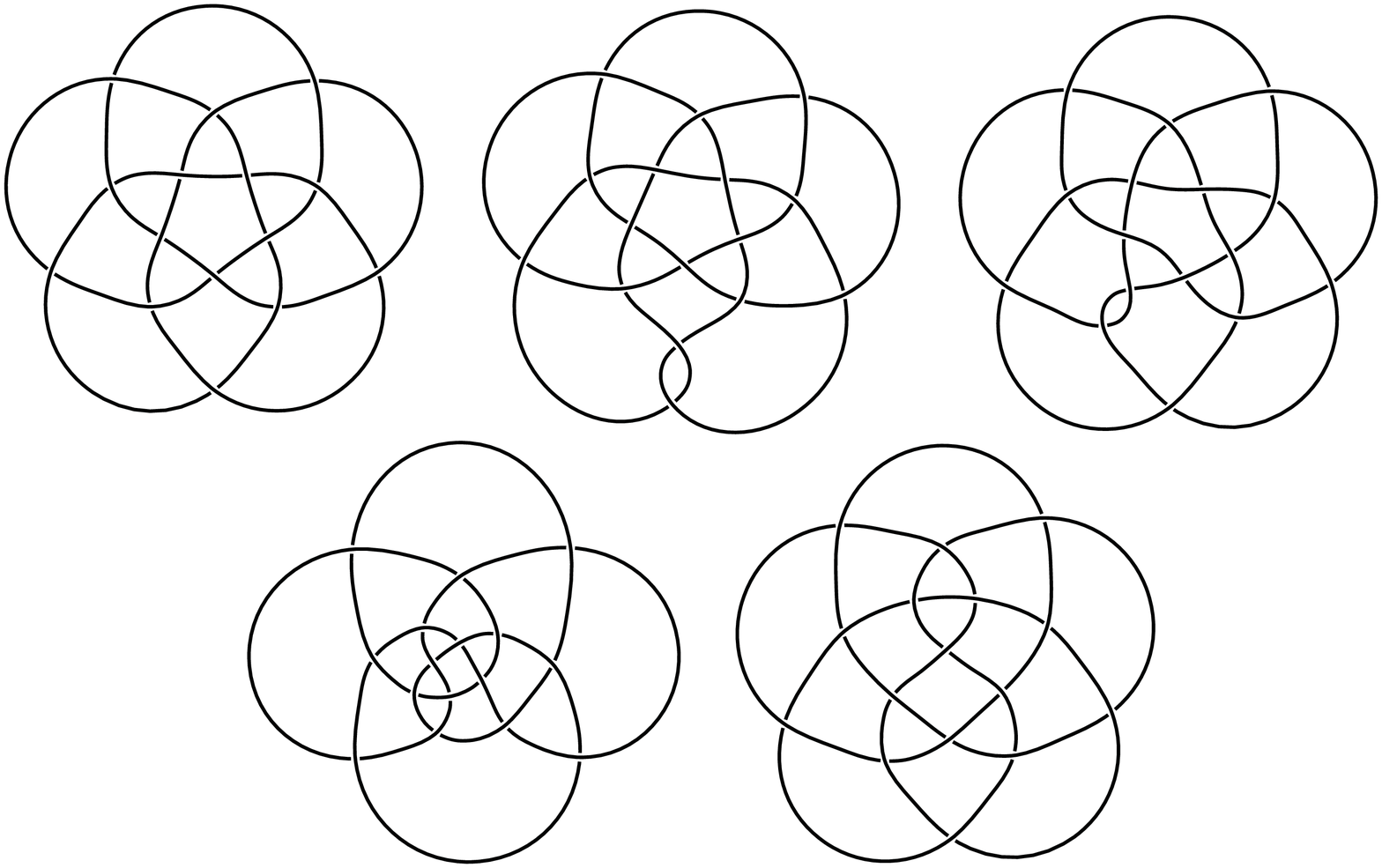}}
\caption{The smallest potential non-trivial examples}\label{fig:smallest}
\end{figure}
\msk

The computations for alternating knots utilized the enumeration by
Flint, Rankin, and Schermann \cite{flintrank1},\cite{flintrank2} 
and their corresponding census as Gauss codes, which are amenable to machine computation. (In addition, the diagrams for 
Figure~\ref{fig:smallest} were generated by their online 
program {\it Knotilus} \cite{knot}.) 
Availability of similar censuses for other classes of knots
would allow application of the same techniques to those classes.

\msk

In outline, in Section \ref{sec:4move} we review the definition of \crl. 
In Section 3 we 
define the group $\sK$ as the top quotient in a normal series for \crk\ 
and show that it is an invariant of knots under 4-moves.
In Section 4 we reduce the problem of showing that 
$\sK$ (and thus \crk) is trivially valued
to showing $\sK$ is \textit{either} finite or abelian. 
In Section 5 we describe several large-scale
computations carried out by the authors, 
and in Section 6 we discuss further avenues of research.

\section{The invariant \crl}\label{sec:4move}

In \cite{dabkowskisahi} Dabkowski and Sahi construct a  a quotient 
of the fundamental group of the exterior of the link $L$, \crl, that is invariant under 4-moves. 
Recall that the Wirtinger presentation for a link group can be obtained from
a diagram of the link: the generators $a_i$ are represented by loops running around the $i$-th unbroken strand
of the diagram, following the righthand rule; this requires an a priori choice of 
orientation to each component of the link. Each crossing provides a relation - either $a_i a_k=a_k a_j$
or $a_k a_i=a_j a_k$, depending upon the handedness of the crossing - where the overstrand labeled
$k$ separates the understrands $i$ and $j$ . As $a_k$ conjugates $a_j$ to $a_i$ (or $a_i$ to $a_j$), 
the generators assigned to each 
component of the link are all conjugate to one another. In the case of knots, as considered here, all of the Wirtinger 
generators are then conjugate to one another.

Starting from a Wirtinger presentation for a link group $\pi(L)$, one may view the invariant \crl\  as follows. Say there is
a  4-move taking $L$ to $L^\prime$ (see Figure~\ref{fig:4move}). Since
$\rl=\rlp$ is a common quotient of both knot groups, a certain word in the 
generators of $\pi(L)$ must be trivial in \crl\ since the corresponding element in $\pi(L^\prime)$ \underbar{is} 
trivial. Depending upon the orientation of the 
strands of the underlying link, the needed relator has the form

\ssk

\ctln{$babab^{-1}a^{-1}b^{-1}a^{-1}$}

\ssk

\noindent where $a$ and either $b$ or $b^{-1}$ are Wirtinger generators, corresponding to the 
two bottom strands of the initial configuration of the simplifying 4-move. 

\msk

To construct their quotient Dabkowski and Sahi~\cite[p.~1266]{dabkowskisahi} 
start with a Wirtinger presentation
$\pi(L)=\pi_1(S^3\setminus L)=\langle X|R\rangle $ and add the relators 

\ssk

\ctln{$babab^{-1}a^{-1}b^{-1}a^{-1}$}

\ssk

\noindent for all $a$ and $b$ that are conjugates 
of an element of $X\cup X^{-1}$;
that is, all $a,b$ in the set
\[{\mathcal C} = \{gxg^{-1} : g\in (X \cup X^{-1})^*, x\in X^{\pm 1}\}\]
(where $(X\cup X^{-1})^*$
denotes the words in the alphabet $X\cup X^{-1}$).
Note that whenever $Y$ is another Wirtinger generating set for $\pi(L)$,
then the subset 
$\{hyh^{-1} : h\in\pi(L), y\in Y^{\pm 1}\}$ of $\pi(L)$
equals ${\mathcal C}$; 
that is, the conjugacy classes of the generating set and their inverses must be the same.
This follows from the fact that any two Wirtinger generators 
associated to the same component of $L$, 
no matter the underlying projection,
are represented by freely homotopic loops in $S^3\setminus L$, 
and therefore are conjugate in $\pi(L)$ 
(using the same orientations of the components).
That is, \crl\ 
does not depend on the initial Wirtinger presentation chosen,
and hence is invariant under Reidemeister moves and so
is an invariant of the link $L$. 
Dabkowski and Sahi then show [DS, Proposition 2.3] that 
\crl\ is, up to isomorphism, unchanged by a 4-move.

Throughout this paper, we consider this invariant in the 
case of a knot $K$.  In this case, all pairs of Wirtinger generators
are conjugate in $\pi(K)$.
A presentation for \crk, with infinitely
many relators, is then given, beginning with a Wirtinger 
presentation $\langle X|R\rangle $ for $\pi(K)$, as 
\[
\rk =\langle X|R\cup R^{\prime\prime}\rangle  = \langle X|R\cup R^{\prime}\rangle ,
\] 
where 
$R^{\prime\prime} := \{(cd)^2(dc)^{-2}\ :\ c,d\in{\mathcal C}\}$ and
\[R^{\prime} := \{(ba)^2(ab)^{-2}\ :\ a\in X, b=ga^\epsilon g^{-1}, 
\epsilon=\pm 1, g\in (X\cup X^{-1})^*\}.\]
These relators are relations $abab=baba$ in the quotient
group \crk.

If K is the unknot then $\rk\cong\bbz$ (choose the projection with no crossings and corresponding Wirtinger
presentation).   Consequently, any knot $K$ that is 4-move equivalent
to the unknot must have $\rk\cong\bbz$. 

Thus, as mentioned above, one may search for a  counterexample to the 4-move conjecture by looking 
for a knot $K$ with $\rk\not\cong\bbz$. 

\section{The invariant $\sK$}\label{sec:tower}

In this section we will focus on knots $K$, and gain a better understanding of 
\crk\ by constructing a 
tower of subgroups of \crk\ and studying the intermediate quotients. This leads
to the definition of the invariant $\sK$.

For the given presentation of \crk, just as for the Wirtinger presentation of $\pi(K)$, 
the sum of the exponents of each relation is 0. Thus, just as for $\pi(K)$,  the 
abelianization of \crk\ is $\bbz$, as all generators are conjugate. It follows 
that if \crk\ is cyclic, then $\rk\cong\bbz$.
%
 However, one can further show the following.

\ssk

\begin{lemma}\label{lem:cyclic}
The group $G=\rk$ is cyclic iff $G/Z(G)$ is cyclic, where $Z(G)$ is the center
of $G$. More generally, $G=\langle X|R\rangle $ is cyclic iff for some subset $S\subseteq Z(G)$
the group $G^\prime=\langle X|R\cup S\rangle $ is cyclic.
\end{lemma}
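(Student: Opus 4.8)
The plan is to reduce both statements to the classical observation that a group possessing a \emph{central} subgroup with cyclic quotient is abelian, together with the fact noted just above that the abelianization of $\rk$ is $\bbz$. I would prove the ``more general'' assertion first, and then read off the first sentence as the special case in which $S$ is a set of words representing the elements of the center $Z(G)$; there $\langle X\mid R\cup S\rangle$ is a presentation of $G/Z(G)$, since $Z(G)$ is already normal.

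The forward implication is immediate: if $G=\langle X\mid R\rangle$ is cyclic then $G'=\langle X\mid R\cup S\rangle$ is obtained by adjoining further relators, hence is a quotient of $G$ and so is cyclic; this holds for any $S$, in particular for $S\subseteq Z(G)$, and with $S$ representing $Z(G)$ it gives that $G/Z(G)$ is cyclic. For the reverse implication, assume $S\subseteq Z(G)$ and $G'=\langle X\mid R\cup S\rangle$ is cyclic. Since each element of $S$ is fixed by conjugation in $G$, the normal closure of $S$ in $G$ is just the subgroup $N:=\langle S\rangle$; being generated by central elements, $N\subseteq Z(G)$, and $G'\cong G/N$. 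Choosing $g\in G$ whose image generates $G/N$, every element of $G$ has the form $g^{n}z$ with $z\in N\subseteq Z(G)$, and any two such elements commute, so $G$ is abelian; that is, $G=G^{\mathrm{ab}}$. Since $G^{\mathrm{ab}}\cong\bbz$ for $G=\rk$, this forces $G\cong\bbz$, which is cyclic.

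I do not anticipate a genuine obstacle; the only point worth flagging is that the reverse implication really does use that $G^{\mathrm{ab}}$ is cyclic --- without it the ``more general'' statement is false, as $G=\bbz^{2}$ with $S$ a single free generator shows. So the argument is written for $G=\rk$, where $G^{\mathrm{ab}}\cong\bbz$ has already been identified, but it applies verbatim to any group known to have cyclic abelianization, which is precisely the setting of the later applications.
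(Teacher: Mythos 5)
Your proof is correct and follows essentially the same route as the paper's: both reduce to the classical fact that a group with a central subgroup having cyclic quotient is abelian, and then invoke that the abelianization of $\rk$ is $\bbz$ to conclude cyclicity. Your remark that the ``more general'' statement genuinely requires cyclic abelianization is accurate and matches the paper's (implicit) use of that hypothesis in its final step.
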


\begin{proof}
If $G$ is cyclic, then every quotient of $G$ is cyclic, so $G^\prime$ is cyclic. On the other hand,
if $G^\prime$ is cyclic, then $G/Z(G)$ is the quotient of a cyclic group,
so $G/Z(G)=\langle x\rangle $ is also cyclic. Choose any element $\tilde{x}\in G$ which maps to $x$ under the
standard projection $p:G\rightarrow G/Z(G)$. The group $G$ is then generated by $\tilde{x}$ and $Z(G)$; 
given $y\in G$, then $p(y)=x^n$ for some $n$, so that $p(y\tilde{x}^{-n})=1$, and thus
$y\tilde{x}^{-n}=z\in Z(G)$, and $y=z\tilde{x}^n$. But since $\tilde{x}$ commutes with
every element of $Z(G)$ (by definition), $G$ is abelian, so $G$ equals its abelianization,
which as we have already remarked, is $\bbz$. So $G=\rk$ is cyclic.
\end{proof}

In essence, one may add any element of the center of \crk\ 
to its set of relators
without altering the cyclicity of the group. This provides more avenues to determine if
\crk\ itself is $\bbz$. This observation
leads us to look for central elements of \crk.

\begin{proposition}\label{prop:fourth}
If $\pi(K)=\langle X|R\rangle $ is a Wirtinger presentation for the knot group of the 
knot $K$, and $\rk=\langle X|R\cup R^\prime\rangle $ is the corresponding
presentation for \crk, then for every $a,b\in X$ we have $a^4=b^4$ in \crk.
In particular, for every $a\in X$, $a^4\in Z(\rk)$.
\end{proposition}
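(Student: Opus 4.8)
The plan is to exploit the relations $R'$ (the relations $(ba)^2 = (ab)^2$ for $a$ a Wirtinger generator and $b$ any conjugate of a generator or its inverse) to transport the fourth power of one generator to that of an adjacent generator, and then use the connectedness of the knot diagram. First I would observe that it suffices to prove $a^4 = b^4$ whenever $a$ and $b$ are Wirtinger generators meeting at a crossing, i.e. whenever $b = gag^{-1}$ for the appropriate overstrand generator $g$ (so $b$ is literally a conjugate of $a^{\pm 1}$ by a single generator); since the underlying diagram of a knot is connected, the relation $a^4 = b^4$ for all generators then follows by propagating along the strands of the diagram. For the "in particular" clause, once all generators have a common fourth power $z := a^4$, note that $z$ is fixed under conjugation by every generator — because $b^4 = (gag^{-1})^4 = g a^4 g^{-1} = g z g^{-1}$ equals $z$ — and the generators generate the group, so $z$ is central.

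The key computation is the local one: from $(ba)^2 = (ab)^2$, i.e. $baba = abab$, I want to derive $a^4 = b^4$ when $b$ is conjugate to $a^{\pm 1}$. The cleanest route is to first rewrite $baba = abab$ as $a^{-1}bab = bab a^{-1}$, i.e. $a^{-1}$ commutes with $bab$; equivalently $(bab)a = a(bab)$, so $a$ commutes with $bab$, hence also with $bab \cdot a^{-1} \cdot (bab)^{-1}$ and with all conjugates $\dots$ — but more to the point, I will use the two relations in $R'$ available here. We also have the relation $(ab)^2 = (ba)^2$ with the roles symmetric, and, crucially, $b$ itself is a conjugate of $a$: write $b = gag^{-1}$. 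The idea is that $b$ and $a$ are conjugate, so they have "the same" fourth power up to conjugation, and the commuting relation $abab = baba$ should force that conjugating element to act trivially. Concretely: from $abab = baba$ one gets $a\cdot bab = bab\cdot a$, so $bab$ commutes with $a$; then $b^2 = b\cdot(a^{-1}\cdot bab \cdot b^{-1})\cdot$ — I would instead compute $(ab)^4$ two ways, or observe $(ab)^2$ commutes with both $a$ and $b$ hence is central in $\langle a,b\rangle$, and then use that $\langle a,b\rangle$ maps onto a dihedral-type group. Since $a,b$ are conjugate in $\pi(K)$, in the abelianization they are equal, so $ab$ has even "length"; combining centrality of $(ab)^2$ in $\langle a,b\rangle$ with the conjugacy $b=gag^{-1}$ should yield $a^4 = (ab)^2 \cdot (\text{something trivial}) = b^4$.

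The main obstacle I anticipate is pinning down exactly this last algebraic step: showing the relations genuinely force $a^4 = b^4$ rather than merely $a^4$ and $b^4$ both being central in the two-generator subgroup. The relation $abab = baba$ alone (a one-relator two-generator group) does not give $a^4 = b^4$ in general — one must use that $b$ is a \emph{conjugate} of $a^{\pm1}$, which is extra information coming from the Wirtinger structure, together with the fact that the relations $R'$ are imposed for \emph{all} conjugates simultaneously (so one has $(b'a)^2 = (ab')^2$ for many $b'$ at once). I expect the cleanest argument is: fix a crossing where overstrand $c$ separates understrands with generators $a$ and $b = c a^{\pm1} c^{-1}$; apply the relation in $R'$ with the pair $(a, c a c^{-1})$ to get $a$ commutes with $b a b$ (or $b^{-1}ab^{-1}$ in the other orientation), rearrange to $a \cdot b \cdot a b^{-1} = b \cdot a \cdot b^{-1} a$ hence $a b a^{-1} = $ a conjugate of $b$ by a word in $a,b$, and then iterate / use the $R'$-relation again on that new conjugate pair, so that the "twisting" telescopes and one is left with $a^4 = b^4$. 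Once the local identity is established, the global statement and the centrality of $a^4$ are immediate from connectedness of the diagram and the fact that $\mathcal{C}$ is closed under the relevant conjugations, as above.
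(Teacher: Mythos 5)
Your overall skeleton matches the paper's: reduce to a single crossing, propagate the identity $a^4=b^4$ along the diagram by connectedness, and deduce centrality of $a^4$ from the fact that all generators share a common fourth power. (One small slip there: your centrality argument conjugates $a$ by a generator $g$ and needs $gag^{-1}$ to again be a Wirtinger generator, i.e.\ it needs every arc to be the overstrand at some crossing, which a diagram need not satisfy; the simpler and correct observation, used in the paper, is that $z=b^4$ visibly commutes with $b$ for every generator $b$, hence with all of $\rk$.)

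The genuine gap is the local computation, and you flag it yourself. Knowing that $abab=baba$, that $(ab)^2$ is central in $\langle a,b\rangle$, and that $b$ is conjugate to $a$ does not by itself yield $a^4=b^4$; some explicit use of the $R'$-relations for conjugate pairs beyond $(a,b)$ is required, and your proposal only gestures at ``iterating'' and ``telescoping'' without producing the identity. The paper's proof does the following concrete thing. At a crossing with overstrand $c$ and Wirtinger relation $ac=cb$, one has $a=cbc^{-1}$, hence $a^4=cb^4c^{-1}$, so it suffices to show that $c$ commutes with $b^4$. This is proved by combining exactly two relations from $R'$, namely (1) $cbcb=bcbc$ for the conjugate pair $b,c$, and (2) $c(bcb^{-1})c(bcb^{-1})=(bcb^{-1})c(bcb^{-1})c$ for the pair consisting of $c$ and its conjugate $bcb^{-1}$, and then carrying out a short but nontrivial rewriting of the word $(bcbcbcb)\,c\,b^{-4}$ into $(bcbcbcb)\,b^{-4}c$, whence $cb^{-4}=b^{-4}c$. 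Your instinct that the relation for the ``new conjugate pair'' $(c,\,bcb^{-1})$ is the missing ingredient is exactly right, but without the explicit manipulation the proof is incomplete: this step is where all the content of the proposition lives, since (as you note) the conclusion fails in the generic one-relator group $\langle a,b \mid abab=baba\rangle$.
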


\begin{proof} 

Since \crk\ is generated
by the Wirtinger generators $b\in X$, to show that
$a^4$ is central it suffices to show that 
$a^4$ commutes with every element $b\in X$.
This will follow from the stronger fact that for
every pair of Wirtinger generators 
$a,b\in X$ we have $a^4=b^4$ in \crk, since $b^4$ clearly commutes 
with $b$, so $a^4$ also commutes with $b$.

Consider the Wirtinger generators $a,b$, and $c$, appearing at a crossing as 
in Figure~\ref{fig:crossing}, with relation $ac=cb$ 
(or $bc=ca$ for the crossing of the opposite sign). 
It suffices to show that  $a^4=b^4$ in \crk\ for this
pair $a,b$, since by induction, traversing the 
knot from undercrossing to undercrossing, one finds
a string of identities $a_1^4=a_2^4=a_3^4=\ldots$ for the successive Wirtinger generators,
showing that all fourth powers of Wirtinger generators are equal.

\begin{figure}
\begin{center}
\includegraphics[width=1.2in,height=1.2in]{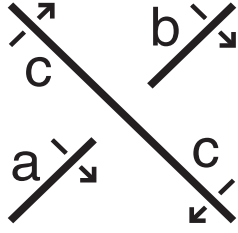}
\caption{Wirtinger generators at a crossing}\label{fig:crossing}
\end{center}
\end{figure}

Since our conclusion is symmetric in $a$ and $b$ it does not
matter which Wirtinger relation actually holds; we will arbitrarily assume that
$ac=cb$. For ease of reading we will follow common notational practice and 
set $A=a^{-1}$, $B=b^{-1}$, and $C=c^{-1}$.   

From $ac=cb$ it follows that $a=cbC$, and so $a^{4}=(cbC)^{4}=cb^{4}C$. Thus $a^{4}=b^{4}$ 
so long as $c$ commutes with $b^{4}$. 
%
Noting that $c$ is also a Wirtinger
generator, and so is conjugate to $b$, consider the following two relations from $R^{\prime}$:

\ssk
\begin{eqnarray*}
(1)&& cbcb=bcbc\\ 
(2)&& c(bcB)c(bcB)=(bcB)c(bcB)c\ .
\end{eqnarray*}
\ssk
\noindent Relation (1) implies that $b$ commutes with $cbc$, and thus $B$ also commutes with $cbc$. Then
\begin{eqnarray*}
(bcbcbcb)cB^{4}& =&  bcbcb[(cbc)(B^{3})]B {\buildrel{(1)}\over{\ =\ }} bcbcb[(B^{3})(cbc)]B = (bcbc)B^{2}cbcB\\
&{\buildrel{(1)}\over{\ =\ }} &(cbcb)B^{2}cbcB = c(bcB)c(bcB) {\buildrel{(2)}\over{\ =\ }} (bcB)c(bcB)c \\
& =&bc(bB^{2})cbcBc = bcb[(B^{2})(cbc)]Bc {\buildrel{(1)}\over{\ =\ }} bcbcbcB^{3}c\\
&=&(bcbcbcb)(B^{4}c)\ .
\end{eqnarray*}

Canceling $(bcbcbcb)$ from both sides we get $cB^4=B^4c$ , so $c$
commutes with $B^4$, and so $c$ commutes with $b^4$, as desired.
\end{proof}

This leads to the tower of subgroups of \crk. For 
the rest of this section, and section 4, fix a Wirtinger 
presentation $\pi(K)=\langle X|R\rangle $ of the knot group.
The bottom of the tower for \crk\ is the subgroup 
\[N := \langle \{a^4\ :\ a\in X\}\rangle \]
Since $a^4\in Z(\rk)$ for all $a\in X$, $N$ is a normal subgroup
of \crk, contained in the center of \crk. 
As $a^4=b^4$ for all $a,b\in X$ by 
Proposition~\ref{prop:fourth}, $N=\langle a^4\rangle $ is cyclic. 
 Lemma~\ref{lem:cyclic} then says that \crk\ is cyclic iff $\rk/N$ is cyclic; note that
if $\rk/N$ is cyclic then $\rk/N\cong \bbz_4$, 
since in this case $\rk=\langle a|\rangle \cong \bbz$.

The top subgroup of the tower for \crk\ is the normal subgroup 
\[H:=\langle \{a^2 : a\in X\}\rangle ^N\] 
(where $\langle~\rangle^N$ denotes the normal subgroup generated
by the set)
of \crk\  
generated by the squares  
of the images of the Wirtinger generators of $\pi(K)$. 
Since $a^4=(a^2)^2$, we have $N\leq H$, giving 
the normal series

\msk
\ctln{$\rk\ \triangleright\ H\ \triangleright N\ \triangleright\ \{1\}$}
\msk

Recall that the set 
${\mathcal C} = \{gag^{-1} : g\in (X \cup X^{-1})^*, a\in X^{\pm 1}\}$ 
is independent of the choice of Wirtinger generating set $X$ for $\pi(K)$.
The groups in this normal series can also be written as
$N = \langle \{c^4 \ : \  c \in {\mathcal C}\}\rangle$
and 
$H = \langle \{c^2 \ : \  c \in {\mathcal C}\}\rangle$;
therefore the subgroups $H$ and $N$ of \crk\ are also independent of the choice of knot projection used
for the Wirtinger presentation. 

\begin{definition}\label{def:sk}
For a knot $K$ with Wirtinger presentation $\pi(K) = \langle X|R\rangle $ we set 
$\sK=\rk/H=\langle X | R\cup R^\prime\cup S\rangle $ where $S=\{ a^2 : a\in X\}$.
\end{definition}

From the discussion above, the following is immediate.

\begin{proposition}
The group $\sK$ is an invariant of the knot $K$, and is 
invariant (up to isomorphism) under 4-moves.
\end{proposition}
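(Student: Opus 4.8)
The plan is to derive both assertions from the Dabkowski--Sahi results on $\rk$ recalled in Section~\ref{sec:4move}, together with the intrinsic description of the normal subgroup $H$ recorded just before Definition~\ref{def:sk}.

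First I would note that $\rk$, equipped with the image $\overline{\mathcal{C}}$ of the conjugacy-closed set $\mathcal{C}$, is canonically attached to the knot $K$. Indeed $\mathcal{C}\subseteq\pi(K)=\pi_1(S^3\setminus K)$ is precisely the set of elements represented by loops freely homotopic to a meridian or its reverse, so it depends only on $K$ and not on a choice of diagram; hence the normal subgroup of $\pi(K)$ generated by the relators in $R^{\prime\prime}$ (equivalently $R^\prime$) is intrinsic, $\rk$ is a well-defined quotient of $\pi(K)$, and the image $\overline{\mathcal{C}}\subseteq\rk$ is intrinsic as well. Since $H=\langle\{c^2:c\in\overline{\mathcal{C}}\}\rangle$ by the discussion preceding Definition~\ref{def:sk}, the subgroup $H$, and therefore the quotient $\sK=\rk/H$, depends only on $K$. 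This gives invariance under Reidemeister moves, hence under ambient isotopy.

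For invariance under 4-moves, suppose $K^\prime$ is obtained from $K$ by a single 4-move. By \cite[Proposition~2.3]{dabkowskisahi} there is an isomorphism $\varphi\colon\rk\to{\mathcal R}_4(K^\prime)$. The key point I would establish is that $\varphi$ carries $\overline{\mathcal{C}}$ onto the corresponding subset $\overline{\mathcal{C}}^\prime\subseteq{\mathcal R}_4(K^\prime)$; this should be read off from the construction in \cite{dabkowskisahi}, since the relators adjoined there are built from elements of $\mathcal{C}$ and the isomorphism realizing the 4-move matches meridians of $K$ with elements of $\mathcal{C}$ for $K^\prime$ and conversely -- in the quotient, the meridian of the strand created or destroyed by the 4-move is a conjugate of meridians already present. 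Granting this, $\varphi$ takes the subgroup generated by the squares of $\overline{\mathcal{C}}$ onto the subgroup generated by the squares of $\overline{\mathcal{C}}^\prime$, that is $\varphi(H)=H^\prime$, so $\varphi$ descends to an isomorphism $\sK=\rk/H\to{\mathcal R}_4(K^\prime)/H^\prime={\mathcal S}_4(K^\prime)$.

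The routine parts are the two bookkeeping facts that $H$ is generated by $\{c^2:c\in\overline{\mathcal{C}}\}$ (already established in Section~\ref{sec:tower}) and that an isomorphism sends a generating set of a subgroup to a generating set of its image. I expect the main obstacle to be the claim that the Dabkowski--Sahi 4-move isomorphism respects $\overline{\mathcal{C}}$: unlike a Reidemeister move, a 4-move changes the knot group itself, so one must extract from the proof of \cite[Proposition~2.3]{dabkowskisahi} that the identification it produces sends meridians to conjugates of meridians. Once this is in hand the Proposition follows immediately.
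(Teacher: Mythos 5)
Your proposal is correct and follows essentially the same route as the paper: the paper's entire proof is ``from the discussion above, the following is immediate,'' where that discussion consists precisely of the two ingredients you use --- the intrinsic description $H=\langle\{c^2 : c\in\mathcal{C}\}\rangle$ with $\mathcal{C}$ independent of the diagram, and the Dabkowski--Sahi 4-move invariance of $\rk$. The one point you rightly flag as needing care, namely that the 4-move isomorphism carries the image of $\mathcal{C}$ onto its counterpart (because the new meridians introduced by the move are conjugates of existing ones), is left implicit in the paper, and your sketch of why it holds is the intended justification.
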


Note that if $\rk\cong\bbz$, then under this isomorphism we have
$N=4\bbz$ and $H=2\bbz$. In particular, $\sK=\rk/H\cong\bbz_2$ is a finite, 
abelian 2-group in this case.  The group
$\sK$ is trivially valued if $\sK \cong \bbz_2$.

In general note that in $\sK$ every generator $a \in X$
has order 2, and so we have $a=a^{-1}$.
Moreover, each conjugate of $a$ must also have order 2, and
hence equals its own inverse. 
Therefore in the presentation of $\sK$ in Definition~\ref{def:sk},
the relations $babab^{-1}a^{-1}b^{-1}a^{-1}$ from $R^\prime$
can be replaced by relations
$(ba)^4$ for all
$a\in X$ and $b$ conjugate to $a$.
In particular, $\sK$ is a quotient of
the Coxeter group 
$\langle X \mid \{a^2=1, (ab)^4=1 \mid a,b \in X\}\rangle$.

This enables
us to write a more useful presentation for $\sK$, as:

\begin{lemma}\label{lem:R/Hpres}
$\sK=\rk/H=\langle X | R\cup S\cup T\rangle $,where
$T=\{(awaw^{-1})^4 : a\in X, w\in X^*\}$.
\end{lemma}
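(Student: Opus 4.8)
The plan is to show that the two presentations $\langle X \mid R \cup R' \cup S\rangle$ (from Definition~\ref{def:sk}) and $\langle X \mid R \cup S \cup T\rangle$ define the same group, by showing that, modulo $R \cup S$, the relator set $R'$ and the relator set $T$ have the same normal closure. First I would record the key simplification, already noted in the paragraph preceding the lemma: in any group in which $S = \{a^2 : a\in X\}$ holds, every Wirtinger generator $a$ is an involution, and since all Wirtinger generators are conjugate, every conjugate $waw^{-1}$ (for $w\in X^*$) is also an involution. Consequently, for $b = ga^\epsilon g^{-1}$ a conjugate of a generator $a$, the relator $babab^{-1}a^{-1}b^{-1}a^{-1}$ from $R'$ becomes $babababa = (ba)^4$ once we use $a^{-1}=a$ and $b^{-1}=b$ (noting $\epsilon$ is irrelevant since $a^{-1}=a$). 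So modulo $\langle R\cup S\rangle$, the normal closure of $R'$ equals the normal closure of $\{(ba)^4 : a\in X,\ b\ \text{conjugate to}\ a\}$.

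Next I would identify this latter set with $T$. An element $b$ conjugate to $a$ in $\pi(K) = \langle X \mid R\rangle$ has the form $b = w a^{\pm 1} w^{-1}$ for some $w \in (X\cup X^{-1})^*$; but again, since $a^{-1}=a$ modulo $S$, and since inverting a generator that appears in $w$ does not change the conjugate it produces (as $a^{-1} = a$ for every generator), we may take $w \in X^*$ and $b = waw^{-1}$. Thus $(ba)^4 = (awaw^{-1})^{-1}$-type expressions; more precisely $(ba)^4 = (waw^{-1}a)^4$, which is a cyclic conjugate of $(awaw^{-1})^4$ — conjugating $(awaw^{-1})^4$ by $w^{-1}$ (or equivalently cyclically permuting) gives $(waw^{-1}a)^4 = (ba)^4$. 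Since normal closures are insensitive to cyclic permutation and conjugation of relators, the normal closure of $\{(ba)^4\}$ equals the normal closure of $T = \{(awaw^{-1})^4 : a\in X, w\in X^*\}$. Combining with the previous step yields that $R \cup R' \cup S$ and $R \cup S \cup T$ have the same consequences, so the two presentations define isomorphic groups, namely $\sK = \rk/H$.

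I expect the main point requiring care — though it is not deep — is the bookkeeping that justifies passing from conjugating words $w \in (X\cup X^{-1})^*$ to $w \in X^*$: one must observe that modulo $S$ the map $X^{\pm 1} \to X$ sending each generator and its inverse to the generator is well-defined on words and does not alter which element $waw^{-1}$ represents, so every element of $\mathcal{C}$ is represented by some $waw^{-1}$ with $w \in X^*$ and $a \in X$. One also needs to note that $R''$ (relators from all pairs $c,d \in \mathcal C$) and $R'$ already have the same normal closure in \crk\ by the discussion in Section~\ref{sec:4move}, so it is harmless to work with $R'$. With these observations in place the argument is a routine manipulation of relator normal closures.
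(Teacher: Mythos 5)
Your proposal is correct and follows essentially the same route as the paper, which justifies the lemma in the paragraph immediately preceding it: modulo $S$ every generator and every conjugate of a generator is an involution, so each relator $babab^{-1}a^{-1}b^{-1}a^{-1}$ in $R'$ collapses to $(ba)^4$, the conjugating words may be taken in $X^*$, and cyclic permutation identifies $(waw^{-1}a)^4$ with $(awaw^{-1})^4$ up to normal closure. Your extra bookkeeping about $R''$ versus $R'$ and about replacing $(X\cup X^{-1})^*$ by $X^*$ is exactly the care the paper leaves implicit.
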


The obstruction to \crk\  being cyclic, we shall show
in the next section, lies in the top subgroup $H$ of the tower. 
In particular, it is detected by the quotient group $\sK$. 

\section{Finite $\sK$ is sufficient}\label{sec:finite}

In this section we establish our main result, that for a knot $K$, the quotient $\sK$ is trivially valued 
(that is, is isomorphic to $\bbz_2$)
if and only if \crk\  is trivially valued (i.e., is isomorphic
to $\bbz$). This will be carried out in several steps.
The first step relies on the following theorem of Baer.

\begin{theorem} (\cite{baer}; see \cite{gor}, Ch. 3, Theorem 8.2)
If $G~=~\langle X|U\rangle $ is a finite group whose generators $X$ 
are conjugate in $G$,
and if for every pair $c,d$ in the conjugacy class containing $X$
the subgroup $\langle c,d\rangle \leq G$ generated
by $c$ and $d$ is a 
$p$-group, then $G$ is a $p$-group.
\end{theorem}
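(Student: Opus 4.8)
The plan is to deduce this from the Baer--Suzuki theorem; in fact, up to a one-line reduction, the statement is just the $p$-local form of that theorem. First I would note that applying the hypothesis to the degenerate pair $c=d=x$, for a generator $x\in X$, forces $\langle x\rangle$ to be a $p$-group, so every element of $X$ is a $p$-element. Then, fixing $x\in X$, every conjugate $x^g$ (for $g\in G$) lies in the same conjugacy class $\mathcal C$ as $x$, so the hypothesis applied to the pair $c=x$, $d=x^g$ shows that $\langle x,x^g\rangle$ is a $p$-group for every $g\in G$. By the Baer--Suzuki theorem (\cite{baer}; \cite{gor}, Ch.~3, Thm.~8.2) this forces $x\in O_p(G)$, the largest normal $p$-subgroup of $G$. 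Since this holds for each $x\in X$ and $X$ generates $G$, we get $G=\langle X\rangle\le O_p(G)\le G$, so $G=O_p(G)$ is a $p$-group.

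If one prefers a self-contained treatment, the Baer--Suzuki step would be reproved in the needed form --- if $x$ is a $p$-element of a finite group $G$ with $\langle x,x^g\rangle$ a $p$-group for every $g\in G$, then $x\in O_p(G)$ --- by induction on $|G|$. Passing to $G/O_p(G)$ (the hypotheses are inherited since quotients of $p$-groups are $p$-groups, and $O_p$ of the quotient is trivial) reduces to the case $O_p(G)=1$, where one must show $x=1$. If $x\ne 1$, then $N:=\langle x^G\rangle$ is a nontrivial normal subgroup of $G$ inheriting the hypotheses; if $N<G$, induction applied to $N$ gives $x\in O_p(N)$, which is characteristic in $N$ and hence a normal $p$-subgroup of $G$, so $O_p(N)\le O_p(G)=1$, a contradiction. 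Thus $N=G$, i.e. $G$ is generated by the single conjugacy class $x^G$, and one then derives a contradiction with $O_p(G)=1$, $G\ne 1$ via a minimal-configuration analysis using Sylow subgroups and transfer, as carried out in \cite{gor}.

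The reduction in the first paragraph is entirely routine; the real content is the Baer--Suzuki theorem itself, and within its proof the genuine obstacle is the last step --- showing that a finite group generated by a conjugacy class of $p$-elements, any two of which generate a $p$-group, is necessarily a $p$-group once it equals $\langle x^G\rangle$. For the purposes of this paper I would simply invoke the theorem as a black box from \cite{baer} and \cite{gor}, as the authors do, since it will be used only to pass from ``all pairs of involutions generate a $2$-group'' to ``the whole group is a $2$-group'' in the analysis of $\sK$.
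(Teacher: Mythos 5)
The paper offers no proof of this statement at all---it is quoted as an external result of Baer, via Gorenstein, Ch.~3, Theorem~8.2---so there is no internal argument to compare against; your first-paragraph reduction (each $x\in X$ is a $p$-element, every $x^g$ lies in the conjugacy class of $X$ so $\langle x,x^g\rangle$ is a $p$-group for all $g$, hence $x\in O_p(G)$ by the Baer--Suzuki theorem, hence $G=\langle X\rangle\le O_p(G)$) is correct and is precisely the routine bridge between the statement as phrased here and the $O_p$-form actually proved in the cited reference. Your second paragraph's sketch of Baer--Suzuki itself is admittedly incomplete at its final step, but since you, like the authors, ultimately invoke that theorem as a black box, this does not affect the validity of your argument.
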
 

\begin{proposition}\label{prop:2group}
If $\sK$ is finite, then $\sK$ is a 2-group.
\end{proposition}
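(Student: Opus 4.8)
The plan is to apply Baer's theorem with $p=2$ to the group $G = \sK$. Baer's theorem requires three hypotheses: that $G$ is finite, that the generators of some presentation are all conjugate in $G$, and that for every pair $c,d$ in the conjugacy class containing those generators, the subgroup $\langle c,d\rangle$ is a $2$-group. The finiteness is given by assumption. The conjugacy of the generators $X$ carries over from the Wirtinger presentation: all Wirtinger generators of $\pi(K)$ are conjugate, and this relation persists in any quotient, in particular in $\sK = \rk/H$.

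The substance of the proof, then, is verifying the third hypothesis. First I would note that by the discussion preceding Lemma~\ref{lem:R/Hpres}, in $\sK$ every element of the conjugacy class of a Wirtinger generator — equivalently every $c \in {\mathcal C}$, reduced modulo $H$ — is an involution, and any two such elements $c,d$ satisfy $(cd)^4 = 1$ (this is exactly the content of the relators in $T$, since $d$ is conjugate to $c$ which is conjugate to some generator $a\in X$). So $\langle c, d\rangle$ is generated by two involutions $c,d$ with $(cd)^4 = 1$, hence is a quotient of the dihedral group $D_4$ of order $8$. Every such quotient is a $2$-group (it has order $1, 2, 4,$ or $8$). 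Therefore $\langle c,d\rangle$ is a $2$-group for every pair $c,d$ in the conjugacy class.

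With all three hypotheses of Baer's theorem verified, we conclude that $\sK$ is a $2$-group, as claimed.

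The only mild subtlety — and the step I would be most careful about — is making sure the conjugacy-class bookkeeping is right: Baer's theorem wants the pairs $c,d$ to range over the full conjugacy class of the generating set in $G = \sK$, not merely over the images of the elements of ${\mathcal C}$ listed in the presentation. But these coincide: the conjugacy class in $\sK$ of (the image of) a generator $a \in X$ is the image of the conjugacy class of $a$ in $\pi(K)$, which is the image of ${\mathcal C}$; and the relators $T = \{(awaw^{-1})^4 : a\in X, w\in X^*\}$ were designed precisely so that every such conjugate is an involution and every product of two such is killed to the fourth power. So no pair escapes the dihedral-quotient argument, and the proof goes through.
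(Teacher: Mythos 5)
Your proof is correct and follows essentially the same route as the paper: both verify the hypotheses of Baer's theorem with $p=2$ by observing that any two elements $c,d$ of the relevant conjugacy class generate a quotient of the dihedral group $\langle c,d \mid c^2, d^2, (cd)^4\rangle$ of order $8$, hence a $2$-group. Your extra care about identifying the conjugacy class in $\sK$ with the image of ${\mathcal C}$ is a point the paper passes over silently, but it does not change the argument.
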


\begin{proof} 
In $\sK= \langle  X | R\cup S \cup T \rangle $, for any $c,d$ in
the conjugacy class of $X$ we have
the relations $c^2=1$, $d^2=1$, and $(cd)^4=1$ in $\sK$. 
Hence the subgroup $\langle c,d\rangle \leq \sK$ is a 
quotient of the Coxeter group $\langle c,d | c^2,d^2,(cd)^4\rangle $, 
which is the
dihedral group of order 8, and hence is a 2-group. 
So the conditions of Baer's Theorem 
(with $p=2$) are 
met, and $\sK$ is a 2-group.
\end{proof}

\begin{lemma}\label{lem:frat} 
If $G$ is a finite 2-group, whose generators are conjugate to one another,
then $G$ is a cyclic group.
\end{lemma}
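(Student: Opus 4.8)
The plan is to use the Frattini subgroup $\Phi(G)$, which for a finite $p$-group equals $G^p[G,G]$, and to exploit the fact that $G/\Phi(G)$ is an elementary abelian $p$-group on which the generating set projects. For $p=2$ we have $G/\Phi(G)\cong (\mathbb{Z}_2)^k$ for some $k$, and since the generators of $G$ are all conjugate in $G$, their images in the \emph{abelian} quotient $G/\Phi(G)$ are all equal. Thus $G/\Phi(G)$ is generated by a single element, so $k\leq 1$, i.e. $G/\Phi(G)$ is cyclic. The key input is then the standard fact (a consequence of Nakayama's lemma / the nongenerator property of the Frattini subgroup for finite groups) that if $G/\Phi(G)$ is generated by the images of a set of elements, then those elements already generate $G$; applied to a single element $g$ whose image generates $G/\Phi(G)$, this shows $G=\langle g\rangle$ is cyclic.

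First I would recall or cite the relevant structure theory: for a finite $p$-group $G$, the Frattini subgroup $\Phi(G)$ is normal, the quotient $G/\Phi(G)$ is elementary abelian, and a subset of $G$ generates $G$ if and only if its image generates $G/\Phi(G)$ (the Burnside basis theorem). Second, I would observe that since all generators of $G$ lie in a single conjugacy class and $G/\Phi(G)$ is abelian, conjugation acts trivially on $G/\Phi(G)$, so all the generator images coincide; hence $G/\Phi(G)$ is cyclic of order $1$ or $2$. Third, I would pick any $g\in G$ mapping to a generator of $G/\Phi(G)$ and invoke the basis theorem to conclude $G=\langle g\rangle$.

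The only real subtlety — and the step I would be most careful about — is making sure the Burnside basis theorem is applied correctly: one needs that $\Phi(G)$ consists of nongenerators, which holds precisely because $G$ is finite (so $\Phi(G)$ is the intersection of the maximal subgroups, and any generating set can be pruned down past $\Phi(G)$). This is entirely standard, so I would simply cite it (e.g.\ alongside the reference to \cite{gor} already used for Baer's theorem) rather than reprove it. Everything else is a short two-line argument, and there is no computation to grind through.
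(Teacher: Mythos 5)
Your proposal is correct and follows essentially the same route as the paper: both pass to the elementary abelian quotient $G/\Phi(G)$, use the fact that conjugate generators have equal images in an abelian quotient to force that quotient to be cyclic, and then lift a single generator back to $G$ via the nongenerator property of the Frattini subgroup (the paper spells out the inductive pruning that you package as the Burnside basis theorem). No changes needed.
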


\begin{proof} 
This appears to be a standard result (we first learned of it from a discussion on
Math Overload \cite{mathover}); for completeness, the argument is included here. 

The Frattini subgroup $F(G)$
of $G$ is the set of
all of the `non-generators' of $G$, that is, all $c$ such that if $b_1,..,b_n,c$ generate
$G$ then $b_1,..,b_n$  generate $G$.  (See, e.g., \cite{hall}
for basic properties of $F(G)$.) $F(G)$ is a normal subgroup of $G$, and 
$G/F(G)$ is an elementary abelian 2-group. But an elementary
abelian 2-group is a direct sum
of copies of the group $\bbz_2$. Since $G$ is generated by a single conjugacy class, 
the abelianization of $G$ is cyclic, and so the quotient 
$G\rightarrow G/F(G)\cong (\bbz_2)^n$ factors through a cyclic group. 
Thus $n=1$. Choosing an element $t\in G$ that maps to a generator of
$G/F(G)$, $G$ is then generated by $t$ and the finite set $F(G)$.
But then from the definition of $F(G)$, every element of $F(G)$ 
can be inductively removed from the generating set, implying that
$G$ is generated by $\{t\}$, i.e., $G$ is cyclic.
\end{proof}

\begin{corollary}\label{cor:z2}
If $\sK$ is finite, then $\sK\cong\bbz_2$. 
\end{corollary}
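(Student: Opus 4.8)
The plan is to combine the two preceding results. First I would invoke Proposition~\ref{prop:2group}: under the hypothesis that $\sK$ is finite, this already tells us that $\sK$ is a finite $2$-group. Next I would check that $\sK$ is generated by a single conjugacy class. Indeed, the generating set of $\sK$ in Definition~\ref{def:sk} consists of the images of the Wirtinger generators $X$, and (as recalled in Section~\ref{sec:4move}) all Wirtinger generators of a knot group are conjugate in $\pi(K)$; their images are therefore conjugate in the quotient $\sK$. Thus $\sK$ is a finite $2$-group generated by a single conjugacy class, and Lemma~\ref{lem:frat} applies to give that $\sK$ is cyclic.

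It then remains only to pin down the order of this cyclic group. Every generator $a\in X$ of $\sK$ satisfies $a^2=1$ in the presentation of Definition~\ref{def:sk}, so a cyclic group generated by these elements must be trivial or isomorphic to $\bbz_2$. To exclude the trivial case I would compute abelianizations: the abelianization of \crk\ is $\bbz$, with every Wirtinger generator mapping to a fixed generator, so each $a^2$ maps to $2$ and the subgroup $H=\langle\{a^2:a\in X\}\rangle^N$ maps onto $2\bbz$; hence the abelianization of $\sK=\rk/H$ is $\bbz/2\bbz\cong\bbz_2$, which is nontrivial. Therefore $\sK$ itself is nontrivial, and we conclude $\sK\cong\bbz_2$.

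I do not anticipate any real obstacle here: the substantive content is entirely contained in Proposition~\ref{prop:2group} and Lemma~\ref{lem:frat}, and the only point needing a word of justification is that $\sK$ is nontrivial, which the abelianization computation settles at once.
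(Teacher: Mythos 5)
Your proposal is correct and follows essentially the same route as the paper: Proposition~\ref{prop:2group} plus Lemma~\ref{lem:frat} give that $\sK$ is a cyclic $2$-group, and nontriviality is settled by showing the abelianization is $\bbz_2$. The only (immaterial) difference is that the paper computes the abelianization directly from the presentation $\langle X\mid R\cup S\cup T\rangle$, whereas you obtain it as $\bbz/2\bbz$ from the known abelianization $\bbz$ of \crk\ and the image $2\bbz$ of $H$; both computations are valid.
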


\begin{proof}
From Proposition~\ref{prop:2group} and Lemma~\ref{lem:frat},
we have that $\sK$ is a cyclic 2-group; thus we must show that
$\sK$ is not the trivial group.
The abelianization of $\sK$ 
has presentation $\langle X \mid R \cup S \cup T \cup 
\{aba^{-1}b^{-1} \mid a,b \in X\}\rangle$, using the
notation from Lemma~\ref{lem:R/Hpres}.  Now the commutator
relations imply that the Wirtinger relations $R$
can be replaced by relations $a=b$ for all $a,b \in X$,
and the relations $T$ are all redundant.  Hence
the abelianization of $\sK$ has presentation 
$\langle a \mid a^2 \rangle$, and is the group $\bbz_2$.
\end{proof}

Next we turn our analysis to the middle quotient $H/N$ of our normal series.

\begin{lemma}\label{lem:abelian}
$H$ and $H/N$ are abelian.
\end{lemma}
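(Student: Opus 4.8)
The plan is to work in the Coxeter-type picture. Recall from Lemma~\ref{lem:R/Hpres} and the surrounding discussion that in $\rk$ every pair of conjugates $c,d\in{\mathcal C}$ satisfies $(cd)^2(dc)^{-2}=1$, i.e.\ $cdcd=dcdc$, and that $H=\langle\{c^2:c\in{\mathcal C}\}\rangle$ while $N=\langle\{c^4:c\in{\mathcal C}\}\rangle$. Since $\rk/H=\sK$ has all generators of order $2$, for each $c\in{\mathcal C}$ the square $c^2$ lies in $H$; my first step is to show that these squares commute with each other in $\rk$. For a single pair $c,d$, the relation $cdcd=dcdc$ can be rewritten as $(cd)^2=(dc)^2$; conjugating, $(cd)^2$ commutes with $(cd)$ and hence with everything in $\langle c,d\rangle$, and in particular $c^2(cd)^2c^{-2}=(cd)^2$. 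I would then manipulate $cdcd=dcdc$ to extract directly that $c^2$ and $d^2$ commute: from $cdcd = dcdc$, left-multiply by $c$ and right-multiply by $c$ to get $c^2 dcdc = cdcdc^2$, and since $cdcd = dcdc$ also gives $cdcdc^2 = dcdcc^2 = dcdc^3$; combining yields $c^2 dcdc = dcdc^3$, hence $c^2(dcdc^{-1}) = (dcdc^{-1})c^2$, which says $c^2$ commutes with the conjugate $dcdc^{-1}\cdot c\cdot(dcdc^{-1})^{-1}$... rather than belabor this, the cleaner route is: $d^2$ commutes with $d$, and I claim $d^2$ commutes with $c$ as well. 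Indeed from $cdcd=dcdc$ we get $cdc = dcdcd^{-1}= dcd\cdot(d^{-1}cd)\cdot\ldots$; the honest computation is $d^{-1}(cdcd)d^{-1} = d^{-1}(dcdc)d^{-1}$, giving $(d^{-1}c)(dcd^{-1}) = (cd)(cd^{-1})$, and after a short rearrangement one obtains $c d^2 c^{-1} = $ something expressible so that $c^{-1}d^2 c$ lies in $\langle d^2\rangle$ inside the dihedral-type quotient; the point I will make precise is that in the group $\langle c,d\rangle$ the element $d^2$ is central (true in the dihedral group of order $8$, and the relations force $\langle c,d\rangle$ to be a quotient of it once we are in $\sK$, but here we are in $\rk$, so I instead argue $d^2$ is central in $\langle c,d\rangle\le\rk$ directly from $cdcd=dcdc$). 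Granting that $d^2$ is central in each two-generator subgroup $\langle c,d\rangle$, it follows that $c^2$ and $d^2$ commute for every pair $c,d\in{\mathcal C}$.

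The second step upgrades this to all of $H$. Since $H$ is generated by $\{c^2:c\in{\mathcal C}\}$ and any two of these generators commute, $H$ is abelian; this also immediately gives that $H/N$ is abelian, being a quotient of $H$. So the substance is entirely in Step~1.

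The main obstacle is carrying out the commutator computation showing $d^2$ is central in $\langle c,d\rangle$ purely from the relation $cdcd=dcdc$ (equivalently $(cd)^2=(dc)^2$), without the extra involution relations $c^2=d^2=1$ available in $\sK$. The key observation that unlocks it: from $(cd)^2=(dc)^2$ one computes $c^{-1}(cd)^2 c = (dc)^2$ as well (conjugate the left side: $c^{-1}(cd)(cd)c = (dc)(cdc) = (dc)(dcd)d^{-1}\cdot d = \ldots$), so $(cd)^2$ is fixed under conjugation by $c$, hence also by $d$ (by the symmetric argument), hence $(cd)^2$ is central in $\langle c,d\rangle$; and then $d^2 = d^{-1}(dc)^2 c^{-1}\cdot\,[\text{rearrangement}] = d^{-1}(cd)^2 c^{-1}$ exhibits $d^2$ as a product of $d^{-1}, c^{-1}$ and the central element $(cd)^2$, from which $c d^2 c^{-1} = c d^{-1}(cd)^2 c^{-1}c^{-1} = (cd^{-1}c)(cd)^2 c^{-2}$; using $(cd)^2=(dc)^2$ to simplify $cd^{-1}c = \ldots$ one checks this equals $d^2$. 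I expect the cleanest packaging is: show $(cd)^2\in Z(\langle c,d\rangle)$ first, then note $d^2 = (cd)^2\cdot(cd)^{-1}d\cdot d^{-1}c^{-1}d\cdot\ldots$ — I will choose whichever bracketing makes the cancellation transparent, but in any case it is a finite free-group identity once the centrality of $(cd)^2$ is in hand, and that centrality is the real content.
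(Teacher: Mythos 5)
Your overall architecture (show that the squares $c^2,d^2$ of any two elements $c,d\in{\mathcal C}$ commute, then conclude that $H=\langle\{c^2:c\in{\mathcal C}\}\rangle$ is abelian and that $H/N$, being a quotient of $H$, is too) is the same as the paper's. But the way you propose to carry out Step 1 has a genuine gap: neither the centrality of $d^2$ in $\langle c,d\rangle$ nor even the commutativity of $c^2$ with $d^2$ follows from the single relation $(cd)^2=(dc)^2$. Concretely, in $S_4$ take $c=(1\,2\,3)$ and $d=(1\,4\,3)$; then $cd=(1\,2)(3\,4)$ and $dc=(1\,4)(2\,3)$ are involutions, so $(cd)^2=(dc)^2=1$, yet $c^2=(1\,3\,2)$ and $d^2=(1\,3\,4)$ do not commute (their products in the two orders are $(1\,4)(2\,3)$ and $(1\,2)(3\,4)$). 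So no manipulation of the one relator $cdcdc^{-1}d^{-1}c^{-1}d^{-1}$ for the single pair $(c,d)$ can succeed. Your intermediate identity $d^2=d^{-1}(cd)^2c^{-1}$ is also false: using $(cd)^2=(dc)^2$, the right-hand side reduces to $d^{-1}(dcdc)c^{-1}=cd$, not $d^2$. And the stronger claim that $d^2$ is central in $\langle c,d\rangle$ for all $c\in{\mathcal C}$ would put $d^2$ in $Z(\rk)$ (since ${\mathcal C}$ generates $\rk$), which would let Lemma~\ref{lem:cyclic} be applied directly with $S=\{a^2:a\in X\}$ and make most of Section~4 unnecessary; the paper does not establish this, and the difficulty of the rest of its argument indicates it should not be expected. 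The one piece of your computation that is correct --- $(cd)^2$ commutes with $c$ and with $d$, hence is central in $\langle c,d\rangle$ --- is just a restatement of the relation and does not by itself yield commutation of the squares.

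What the paper's proof actually uses, and what your proposal omits, is that ${\mathcal C}$ is closed under inversion, so for the pair $p,q$ one has \emph{three} relations available: $(qp)^2=(pq)^2$, $(q^{-1}p)^2=(pq^{-1})^2$, and $(qp^{-1})^2=(p^{-1}q)^2$. All three are combined in an explicit word computation reducing $p^2q^2p^{-2}q^{-2}$ to the identity. (In the $S_4$ example above, the relation for the pair $(c,d^{-1})$ fails --- $cd^{-1}=(1\,2\,4)$ has order $3$ --- which is why that example is consistent with the lemma itself.) To repair your argument you would need to bring the inverse-pair relations into play and perform a cancellation of essentially the paper's kind; the centrality of $(cd)^2$ alone will not get you there.
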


\begin{proof}
Recall that $H$ is the normal closure in \crk\ of the squares 
of the images in \crk\ of the Wirtinger generators $X$.
$H$ is therefore generated by the (possibly infinite) collection of
conjugates of squares of Wirtinger generators of \crk. 
Given a pair of these generators $x,y$ of $H$, we can then set $x=ga^2g^{-1}$ and $y=hb^2h^{-1}$ for 
some $a,b\in X$ and 
$g,h\in\rk$. Now set $p=gag^{-1}$ and $q=hbh^{-1}$; then $x=p^2$ and $y=q^2$. 
Write $P$ for $p^{-1}$ and $Q$ for $q^{-1}$. Note that $p$ and $q$ are conjugate
to elements of $X$.

Using the relations in \crk\
\begin{eqnarray*}
(1)&& QpQp=pQpQ \text{ (so }pq=qPqpQp\text{)}\\ 
(2)&& qPqP=PqPq \text{ (so }qPq=PqPqp\text{)}\\
(3)&& qpqp=pqpq,\\
\end{eqnarray*}
we then find that
\begin{eqnarray*}
xyx^{-1}y^{-1}&=&p(pq)qPPQQ {\buildrel{(1)}\over{\ =\ }}p(qPq)pQ(pq)PPQQ\\
&{\buildrel{(2),(1)}\over{\ =\ }}&p(PqPqp)pQ(qPqpQp)PPQQ {\buildrel{\text{}}\over{\ =\ }}qPqpqpQPQQ\\
&{\buildrel{(3)}\over{\ =\ }}&qPpqpqQPQQ {\buildrel{\text{}}\over{\ =\ }} 1\\
\end{eqnarray*}
Therefore, in $H$, $xyx^{-1}y^{-1} = 1$,  i.e., $yx=xy$. Since this holds for every
pair of generators of $H$, $H$ is abelian.
\end{proof}

We now have the tools to establish the relationship between $\sK$ and $\rk$.

\begin{theorem}\label{thm:finite}
If $K$ is a knot, and if $\sK$ is finite, then $\rk\cong\bbz$.
\end{theorem}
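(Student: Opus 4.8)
The plan is to leverage the three-step normal series
$\rk \triangleright H \triangleright N \triangleright \{1\}$ together
with Lemma~\ref{lem:cyclic}, reducing the claim $\rk\cong\bbz$ to showing
that $\rk$ is cyclic, and then to showing (using Lemma~\ref{lem:cyclic})
that $\rk/N$ is cyclic.  First I would observe that if $\sK=\rk/H$ is
finite, then by Corollary~\ref{cor:z2} we have $\sK\cong\bbz_2$; in
particular $H$ has index $2$ in $\rk$.  The goal is then to deduce that
$H$ (equivalently $H/N$, since $N\leq Z(\rk)$ is cyclic by
Proposition~\ref{prop:fourth}) is cyclic, for then $\rk/N$ is an
extension of a cyclic group of order $2$ by a cyclic group, hence
$\rk/N$ will turn out to be cyclic, and Lemma~\ref{lem:cyclic} finishes
the argument.

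The key steps, in order, are: (i) record that $H$ is abelian by
Lemma~\ref{lem:abelian}, and that $N$ is a cyclic central subgroup
generated by $a^4$; (ii) show that $H$ is \emph{finitely generated} as
an abelian group --- this should follow because $\rk/H\cong\bbz_2$ is
finite, so $H$ is generated by finitely many coset representatives of
$N$ in $H$ together with a generator of $N$, using that $\sK$ finite
forces $H/N$ to be finite (indeed $H/N$ is a quotient of the finite
group $\rk/N$, which has order $2|\sK|$); (iii) identify the
abelianization: since $\rk^{\mathrm{ab}}\cong\bbz$ with $H$ mapping onto
$2\bbz$ and $N$ onto $4\bbz$, the image of $H$ in $\rk^{\mathrm{ab}}$ is
infinite cyclic; (iv) conclude that the finitely generated abelian
group $H$, which surjects onto $\bbz$ and whose quotient $H/N$ is
finite, must itself be infinite cyclic, because $N$ is already cyclic
and the extension $N \hookrightarrow H \twoheadrightarrow H/N$ of a
finite group by $\bbz$ that admits a surjection to $\bbz$ can have no
torsion (any torsion element would lie in the torsion subgroup of the
finitely generated abelian group $H$, which must map to $0$ in $\bbz$,
forcing the torsion subgroup to inject into $H/N$... ).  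Then $\rk/N$ is
a (necessarily central, by the cyclicity plus Lemma~\ref{lem:cyclic}
mechanism) extension $\bbz \to \rk/N \to \bbz_2$; applying
Lemma~\ref{lem:cyclic} to the cyclic-modulo-center criterion, since
$(\rk/N)$ modulo its center is cyclic (it is a quotient of $\bbz_2$),
$\rk/N$ is cyclic, hence $\cong\bbz_4$, and therefore $\rk\cong\bbz$.

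I expect the main obstacle to be step (iv): carefully arguing that the
finitely generated abelian group $H$ is \emph{torsion-free}, and hence
$\cong\bbz$.  The structure theorem for finitely generated abelian
groups gives $H\cong\bbz^r\oplus T$ with $T$ finite; the surjection
$H\to H^{\mathrm{ab}}\to\bbz$ (image of $H$ in $\rk^{\mathrm{ab}}\cong\bbz$)
forces $r\geq 1$, and the fact that $H/N$ is finite with $N$ cyclic
forces $r\leq 1$ and constrains $T$; the delicate point is to rule out
a nontrivial $T$, which should follow by examining how $N=\langle
a^4\rangle$ sits inside $H\cong\bbz\oplus T$ as a finite-index subgroup
and using that $N$ is cyclic (so $N$ must project injectively to the
$\bbz$ factor up to finite index, and the finite part $T$ must then
embed in $H/N$, but then $T$ is detected in $\sK$ or in the further
quotient, contradicting $\sK\cong\bbz_2$ --- here one uses that $\rk/N$
has order exactly $2|\sK|=4$).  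An alternative, possibly cleaner route
for step (iv): since $\rk/N$ is a finite group generated by a single
conjugacy class of elements (the images of the Wirtinger generators),
and every pair of these generates a $2$-group or a quotient thereof, one
could instead apply Baer's theorem directly to $\rk/N$ to conclude it is
a $2$-group, then apply Lemma~\ref{lem:frat} to get that $\rk/N$ is
cyclic; then Lemma~\ref{lem:cyclic} gives $\rk$ cyclic, hence
$\cong\bbz$.  This second route mirrors the proof of
Corollary~\ref{cor:z2} almost verbatim and is likely the intended
argument, with the only new ingredient being that $\sK$ finite implies
$\rk/N$ finite (because $|\rk/N| = |N\!:\!N|\cdot|H/N|\cdot|\rk/H|$ and
$H/N$ is a quotient of the finite group $\rk/N$... ) --- so the genuine
crux reduces to showing $H/N$ is finite once $\sK=\rk/H$ is finite,
which I would establish by noting $H/N$ is finitely generated abelian
(coset reps of $N$ in $H$, finitely many since $\rk/N$ need not yet be
known finite --- so instead: $H$ is generated, as a normal subgroup of
$\rk$, by the $\rk$-conjugates of the finitely many $a^2$, but modulo
$N$ these conjugates are controlled by the finite group $\sK$ acting,
giving finitely many generators for $H/N$, each of finite order since
$H/N$ is a quotient of... ) and torsion.
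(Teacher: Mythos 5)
Your overall skeleton is the right one (it matches the paper's: get $\rk/N$ to be a finite $2$-group generated by a single conjugacy class, apply Lemma~\ref{lem:frat} to get it cyclic, then Lemma~\ref{lem:cyclic} to get $\rk\cong\bbz$), and you correctly identify that the whole proof hinges on showing $H/N$ is finite. But that crux is never actually established: both of your attempts at it are circular. You justify ``$H/N$ is finite'' by saying it is a quotient (in fact it is a subgroup) of ``the finite group $\rk/N$'' of order $2|\sK|$ --- but the finiteness of $\rk/N$ is exactly what is at stake, and your final sentence (``each of finite order since $H/N$ is a quotient of\dots'') trails off at precisely this point. The missing idea is elementary but essential: since $\sK=\rk/H$ is finite, $H$ has \emph{finite index} in the finitely generated group $\rk$, and a finite-index subgroup of a finitely generated group is finitely generated; hence the generating set $\{ga^2g^{-1}\}$ of $H$ contains a finite generating set. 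Combined with Lemma~\ref{lem:abelian} ($H$ abelian) and the observation that $(ga^2g^{-1})^2=ga^4g^{-1}=1$ in $H/N$, this exhibits $H/N$ as a finitely generated abelian group all of whose generators have order dividing $2$, hence a finite elementary abelian $2$-group. Then $|\rk/N|=|H/N|\cdot|\sK|$ is a power of $2$ and the rest of your argument goes through. Your detour through the torsion-freeness of $H$ (step (iv)) is unnecessary once this is in place.

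Your proposed ``alternative route'' --- applying Baer's theorem directly to $\rk/N$ --- does not work as stated, for two reasons. First, Baer's theorem as quoted requires the group to be finite as a \emph{hypothesis}, so it cannot be used to establish finiteness of $\rk/N$. Second, even granting finiteness, the two-generator subgroups $\langle c,d\rangle$ of $\rk/N$ are only known to be quotients of $\langle c,d\mid c^4,d^4,(cd)^2(dc)^{-2}\rangle$ (the generators have order dividing $4$ there, not $2$), which is not the dihedral group of order $8$ and is not obviously a $2$-group; the dihedral-group argument in Proposition~\ref{prop:2group} is available only in $\sK$, where the relations $c^2=1$ hold. The paper avoids both problems by deducing that $\rk/N$ is a $2$-group from the order count above rather than from Baer.
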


\begin{proof}
Since $\sK=\rk/H$ is finite, $H$ is a finite index subgroup of the
finitely generated group \crk\ and so is finitely generated. Thus 
the generating set $C=\{ga^2g^{-1} : a\in X, g\in\rk\}$ 
for $H$ contains a
finite generating set for $H$.
Notice that in $H/N$, every element of $C$ has order 2, since $(ga^2g^{-1})^2=ga^4g^{-1}=1$, given that $a^4=1$ in $H/N$.
Thus $H/N$ is a finitely-generated
abelian group, whose generators $z_i$ all have order 2. It follows that $H/N$ is isomorphic
to a quotient of $(Z_2)^n$ for some $n$
(the map sending the $i^{th}$ standard generator of $(Z_2)^n$ 
to $z_i$ is a surjective homomorphism).
In particular, $H/N$ is finite with
order a power of 2. But then since $\sK\cong\bbz_2$ by Corollary \ref{cor:z2}, then $\rk/N$ is a finite
group with order a power of 2, and so $\rk/N$ is a finite 2-group. Since it is
generated by the Wirtinger generators, all of which are conjugate, we conclude
from Lemma \ref{lem:frat}
that $\rk/N$ is cyclic. Since $N\subseteq Z(\rk)$, Lemma \ref{lem:cyclic}
implies that $\rk$ is cyclic, 
and so $\rk\cong\bbz$.
\end{proof}

Note that in general if $\sK$ is abelian, then
$\sK$ equals its own abelianization, which was
shown in the proof of Corollary~\ref{cor:z2}
to be the group $\bbz_2$.
Since if $\rk\cong\bbz$ then the quotient $\sK\cong\bbz_2$ 
is both abelian and finite, we have
the following.

\begin{corollary}\label{cor:iff} The following are equivalent:
\begin{eqnarray*}
 (1) &&\rk\cong\bbz; \mbox{ i.e., } \rk \mbox{ is trivially valued.} \\
 (2) &&\sK \mbox{ is finite.}\\
 (3) &&\sK \mbox{ is abelian.}\\
 (4) &&\sK\cong\bbz_2; \mbox{ i.e., } \sK \mbox{ is trivially valued.}
 \end{eqnarray*}
\end{corollary}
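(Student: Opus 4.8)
The plan is to assemble Corollary~\ref{cor:iff} entirely from the results already proved in this section, treating it as a bookkeeping exercise linking four conditions in a cycle of implications. First I would observe that $(1)\Rightarrow(4)$ is essentially the remark preceding Corollary~\ref{cor:z2} together with the paragraph after Definition~\ref{def:sk}: if $\rk\cong\bbz$, then under this isomorphism $H$ corresponds to $2\bbz$, so $\sK=\rk/H\cong\bbz_2$. The implication $(4)\Rightarrow(3)$ is trivial since $\bbz_2$ is abelian, and $(4)\Rightarrow(2)$ is trivial since $\bbz_2$ is finite; so in particular $(1)$ implies all of $(2),(3),(4)$.

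Next I would close the loop from $(2)$ and from $(3)$ back to $(1)$. The implication $(2)\Rightarrow(1)$ is exactly Theorem~\ref{thm:finite}: if $\sK$ is finite then $\rk\cong\bbz$. For $(3)\Rightarrow(1)$ I would invoke the note stated just before the Corollary: if $\sK$ is abelian then $\sK$ equals its own abelianization, which was computed in the proof of Corollary~\ref{cor:z2} to be $\bbz_2$; in particular $\sK$ is then finite, so condition $(2)$ holds, and Theorem~\ref{thm:finite} again gives $\rk\cong\bbz$. At this point we have $(1)\Rightarrow(2)\Rightarrow(1)$, $(1)\Rightarrow(3)\Rightarrow(1)$, and $(1)\Rightarrow(4)$, with $(4)$ in turn implying $(2)$ (or $(3)$); this establishes the mutual equivalence of all four statements.

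I do not anticipate a genuine obstacle here, since every nontrivial step has been isolated and proved earlier: the content lives in Theorem~\ref{thm:finite} (finiteness of $\sK$ forces $\rk\cong\bbz$) and in the abelianization computation inside Corollary~\ref{cor:z2}. The only thing to be careful about is the logical routing — making sure the abelian case $(3)$ is funneled through finiteness before applying Theorem~\ref{thm:finite}, rather than attempting a separate argument — and making sure the trivial direction $(1)\Rightarrow(4)$ correctly identifies $H$ with $2\bbz$ under the isomorphism $\rk\cong\bbz$, which follows because $H=\langle\{a^2:a\in X\}\rangle^N$ and all Wirtinger generators map to $\pm1$ generators of $\bbz$. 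So the proof is a short chain of citations, arranged as a cycle.
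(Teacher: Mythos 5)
Your proposal is correct and follows essentially the same route as the paper: the implication $(1)\Rightarrow(4)$ comes from identifying $H$ with $2\bbz$ under $\rk\cong\bbz$, $(2)\Rightarrow(1)$ is Theorem~\ref{thm:finite}, and $(3)$ is handled by the observation that an abelian $\sK$ equals its abelianization $\bbz_2$, with the trivial implications closing the cycle. The minor difference in routing $(3)$ through finiteness rather than directly to $(4)$ is immaterial.
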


Corollary \ref{cor:iff} demonstrates that the non-triviality of \crk\ ``lives'' at the top stage of our
filtration $\rk\triangleright H\triangleright N\triangleright \{1\}$. The main point to this result is that
it appears in practice to be much easier to analyze the presentation
of the group $\sK$ than that of
$\rk$. For example, the fact that in $\sK$ every generator is its own inverse is
in practice a great advantage.

\msk

As an example, a direct computation in GAP shows that the group
\ssk

$G_{3,5}=\langle a_1,a_2,a_3 \mid$

\hfill $ \{a_i^2, 
(a_i(wa_jw^{-1}))^4\ :\ i,j\in \{1,2,3\},
w\in\{a_1,a_2,a_3\}^*,|w|\leq 5\}\rangle,$

\ssk

\noindent (where $|w|$ denotes word length) has order 5192, and in 
particular is finite. But for any knot group generated by at most three meridians,
for example, any 3-bridge knot group, the group $G_{3,5}$ surjects onto $\sK$.
In particular, the map that sends the elements $a_i$
to the three generating meridians is a surjective homomorphism. This implies that
$\sK$ is finite whenever $\pi(K)$ is generated by at most 3 meridians.

\begin{theorem}\label{thm:bridge3}
If $K$ is a knot with bridge number 3, then $\sK$ (and thus \crk)
is trivially valued. More generally,
if $\pi(K)$ is generated by 3 meridians, then $\sK$ and \crk\ are trivially valued.
\end{theorem}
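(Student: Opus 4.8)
The plan is to combine the finiteness criterion from Corollary~\ref{cor:iff} with an explicit finite computation. By Corollary~\ref{cor:iff}, to show that $\sK$ and \crk\ are trivially valued it suffices to show that $\sK$ is finite; so the entire task reduces to bounding $\sK$ when $\pi(K)$ is generated by three meridians.

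First I would reduce the bridge-number-3 case to the meridian statement: a knot with bridge number $3$ has a presentation of $\pi(K)$ in which the group is generated by $3$ meridians (the three overbridges), so the "more generally" clause subsumes the first sentence. So the remaining work is: if $\pi(K) = \langle m_1, m_2, m_3 \mid \cdots\rangle$ with each $m_i$ a meridian, show $\sK$ is finite. Using the presentation of $\sK$ from Lemma~\ref{lem:R/Hpres}, every generator $x \in X$ is an involution, every generator is conjugate to each $m_j$, and every pair of conjugates of the $m_j$ satisfies the order-$4$ braid-type relation $(cd)^4 = 1$. Taking the $m_i$ as the three generators, I would observe that $\sK$ is a quotient of the group
\[
G_{3} = \langle a_1,a_2,a_3 \mid \{a_i^2,\ (a_i(wa_jw^{-1}))^4 : i,j \in \{1,2,3\},\ w \in \{a_1,a_2,a_3\}^*\}\rangle,
\]
via the surjection $a_i \mapsto m_i$; the other Wirtinger generators of $\pi(K)$ map to conjugates of the $m_i$, which already lie in the image, so this map is onto.

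The heart of the argument is then to show that $G_3$ — equivalently its finite truncation $G_{3,5}$ with $|w| \le 5$ — is finite, which the excerpt states is verified by a direct \GAP\ computation giving $|G_{3,5}| = 5192$. The key subtlety is justifying that the finite truncation already captures all the relations, i.e.\ that $G_3 = G_{3,5}$: one needs to check that in the finite group $G_{3,5}$ the relators $(a_i(wa_jw^{-1}))^4$ for longer words $w$ are already consequences of the short ones. Since $G_{3,5}$ is finite, the conjugacy class of $\{a_1,a_2,a_3\}$ in $G_{3,5}$ is finite, and every element of it is represented by some $w a_j w^{-1}$; one then verifies (again computationally, by enumerating the finitely many conjugates) that every such element $c$ satisfies $(a_i c)^4 = 1$ in $G_{3,5}$ for all $i$. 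This shows $G_{3,5}$ satisfies all the defining relations of $G_3$, so $G_3$ surjects onto $G_{3,5}$; combined with the obvious surjection $G_3 \twoheadrightarrow G_{3,5}$ in the other direction being the identity on generators, $G_3 = G_{3,5}$ is finite. Then $\sK$, as a quotient of a finite group, is finite, and Corollary~\ref{cor:iff} finishes the proof.

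The main obstacle is the computational verification that the truncation stabilizes — that no relator coming from a word $w$ of length greater than $5$ is needed. Everything else is a routine invocation of the machinery already built: the reduction to meridian generators via Lemma~\ref{lem:R/Hpres}, the surjection $G_3 \twoheadrightarrow \sK$, and the passage from finiteness of $\sK$ to trivial valuation via Corollary~\ref{cor:iff}. In practice one checks in \GAP\ that enlarging the bound on $|w|$ from $5$ does not change the order $5192$, which gives strong evidence; a rigorous argument notes that once $G_{3,5}$ is finite with a finite conjugacy class of involutions, adding the (finitely many) remaining relators of $G_3$ cannot shrink it further precisely because each is already satisfied, which is a finite check.
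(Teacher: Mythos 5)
Your overall strategy is the same as the paper's: reduce to finiteness of $\sK$ via Corollary~\ref{cor:iff}, observe that three generating meridians of $\pi(K)$ give three generators of $\sK$ satisfying the involution and order-$4$ relations, and invoke the \GAP\ computation that $|G_{3,5}|=5192$. However, the step you single out as ``the main obstacle'' --- verifying that the truncation stabilizes, i.e.\ that $G_3=G_{3,5}$ --- is both unnecessary and, as you set it up, based on a reversed quotient map. Since $G_3$ is obtained from $G_{3,5}$ by \emph{adding} relators, the obvious surjection goes $G_{3,5}\twoheadrightarrow G_3$ (not ``$G_3\twoheadrightarrow G_{3,5}$'' as you write), so $G_3$ is automatically a quotient of the finite group $G_{3,5}$ and hence finite; no check that the long relators are consequences of the short ones is needed for finiteness. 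Better still, you can bypass $G_3$ entirely: the assignment $a_i\mapsto m_i$ already defines a surjective homomorphism $G_{3,5}\twoheadrightarrow\sK$, because every relator of $G_{3,5}$ maps to a relation of the form $c^2=1$ or $(cd)^4=1$ with $c,d\in{\mathcal C}$, all of which hold in $\sK$, and the images of the $m_i$ generate $\sK$. This is exactly the paper's argument. Thus the only computational input required is the single finite \GAP\ computation $|G_{3,5}|=5192$; the extra enumeration of conjugates you propose would establish the stronger, but irrelevant, statement $G_3\cong G_{3,5}$, and leaving it unverified does not leave a gap once the direction of the quotient maps is straightened out.
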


Przytycki has shown that all 2-bridge knots are 4-move equivalent to the unknot \cite{przytycki},
so this computation provides no new information for bridge number less than 3. It is known 
that a knot whose group is generated by two meridians is 2-bridge \cite{bz}; the corresponding
result is not known to be true for three meridians.

\msk

It is tempting to continue this line of reasoning further; we can, for any $n$ and $k$,
define 

\ssk

$G_{n,k}=\langle a_1,\ldots,a_n \mid$

\hfill $\{a_i^2, (a_i(wa_jw^{-1}))^4\ :\ 
i,j\in\{1,\ldots,n\}, w\in\{a_1,\ldots,a_n\}^*,|w|\leq k\rangle $

\ssk

\noindent and define the corresponding group $G_n=G_{n,\infty}$ as a direct limit
of successive quotients, and ask:

\begin{question}\label{question:finite}
Is $G_n$ finite for all $n$?
\end{question}

If the answer to this question is `Yes', then \crk\ is trivially valued (i.e., isomorphic 
to $\bbz$) for all knots $K$. Note that $G_n$ is finite iff $G_{n,k}$ is finite for some $k$, since if $G_n$ is
finite then it has a finite presentation. The generators and relations used in that finite 
presentation can be obtained from our given presentation by finitely many Tietze transformations;
choosing $k$ to be the length of the longest relation used, then $G_{n,k}\cong G_n$ is finite.

This implies that for any $n$ for which the answer to the above question is `Yes' we can in
principle verify this answer by a finite computation. In particular, by running a 
parallel coset enumeration computation
(with a staggered start) on the groups $G_{n,k}$ with ever larger $k$, for any of these groups $G_n$
that are finite, the enumeration is guaranteed to terminate 
(see, for example, \cite[Chapter~5]{heo} for 
details of this procedure).

\section{Large-scale computation of $\sK$}\label{sec:comp}

The original goal of this project was to find a counterexample to the 4-move conjecture,
by discovering a knot $K$ for which $\sK$ was not $\bbz_2$. 
No such example was found. 
(The title of this paper would otherwise have been quite different!) However, 
determining $\sK$ is in practice much more amenable to machine computation than determining
$\rk$, particularly since to show that
$\sK\cong\bbz_2$ it suffices by Corollary \ref{cor:iff}
to show that $\sK$ is either abelian or finite. 
Such computations are much more likely to 
terminate, and are much quicker than computation of $\rk$. 

Strictly speaking, one cannot
hand to a program the infinite set of relators needed to describe $\sK$ or $\rk$;
however, one may truncate the infinite set of relators. 
In analogy with the groups $G_{n,k}$
above, consider the groups $G_k(K)$ defined from a Wirtinger presentation
$\pi(K)=\langle X \mid R\rangle $ for $K$ by 
\[
G_k(K):=\langle X \mid R\cup S \cup R_k\rangle,
\] 
where $S=\{a^2 \mid a \in X\}$ and
 
\ssk

\centerline{$R_k:=\{(awbw^{-1})^4 \mid 
a,b\in X,  w\in X^*, |w|\leq k\}$.}

\ssk

\noindent
Since every relator in this presentation of $G_k(K)$ is
a consequence of the relators
in the presentation of $\sK$ in Definition~\ref{def:sk},
there is a canonical surjective homomorphism
$G_k(K)\twoheadrightarrow\sK$. Therefore, 
in order to show that $\sK$ is finite it suffices to show 
that $G_k(K)$ is finite for some $k$. 
By an identical argument to that given for the groups $G_{n,k}$
above, this is also a necessary condition for the finiteness of $\sK$. 

For example, for the knot $K$
described in \cite{ask} as a likely candidate for a counterexample to the 4-move conjecture,
the computation that $\sK\cong\bbz_2$, and therefore $\rk\cong\bbz$, 
is almost immediate;  its Wirtinger
presentation, together with the relators $a^2$ and a small subset of the relators
$(awbw^{-1})^4$ are sufficient for the program GAP to conclude that the resulting group 
$G_k(K)$, for all sufficiently large $k$,
 has order 2. (In fact, entering the presentation takes far longer than the computation!) Bolstered
by such initial success, we carried out analogous computations on the largest
census of knots at our disposal.

The input data needed for computing $G_{k}(K)$ that is specific to the knot K 
is the Wirtinger presentation, taken from a diagram of the knot.
For alternating knots this presentation can be easily constructed from a Gauss/Dowker
code for the knot \cite{ga}. Recall that the Gauss code of a knot diagram with $n$ crossings
is a string of the integers $\{1,\ldots,n\}$, each occurring exactly twice. The
string is constructed by numbering the crossings $1$ through $n$, and then traveling along the
knot, writing down the crossing numbers encountered in order. For an alternating knot
this is sufficient to construct a Wirtinger presentation for the knot, since the 
additional over/undercrossing information is strictly not needed; we can arbitrarily assume that the
first crossing met is the overcrossing, knowing that succeeding crossings will
alternate. (The `incorrect' choice will lead to the mirror image of the
knot, which has the same knot group.) Since the label for the crossing can be 
imputed to be the labeling
for the overstrand at the crossing, we can determine which understrands meet at a crossing
from the Gauss code. This is illustrated by the example in Figure~\ref{fig:gauss}.

\begin{figure}
\begin{center}
\includegraphics[width=3in,height=2in]{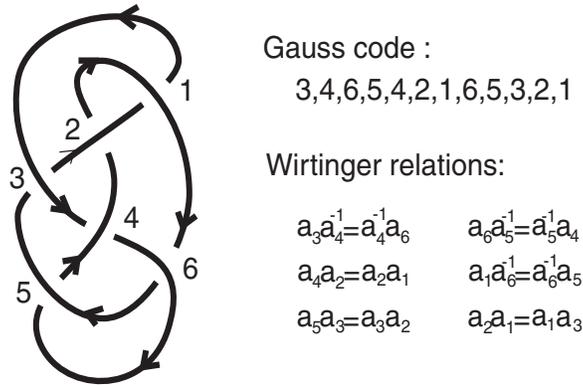}
\caption{From Gauss code to presentation}\label{fig:gauss}
\end{center}
\end{figure}

As can be seen, the trio of numbers centered on an undercrossing (which we may arbitrarily
assume occur at the entries of even index in the Gauss code sequence) reflect the generators,
in order, in a Wirtinger relation. What they do not reflect is the exponent of the 
overcrossing generator, that is, the conjugator, in the relation. But since in 
our quotients $\sK$ and $G_n(K)$
we have $a_i^2=1$ for each generator and hence $a_i=a_i^{-1}$, one can replace $a_i^{-1}$ 
with $a_i$ in these defining
relations without changing the groups that are presented by these ``Wirtinger relations''
together with the relations $\{a_i^2=1 | a_i\in X\}$. These
are the presentations that were extracted from Gauss codes for alternating knots, to use in our
large-scale computations.

Flint and Rankin provide, on their website, the Prime Alternating Knot Generator software \cite{pakg} to generate
the Gauss codes for every alternating knot (without duplication) of whatever number
of crossings is specified by the user. From this, as described above, one can build
a Wirtinger presentation for each knot, and then construct presentations for the 
groups $G_n(K)$ to test for finiteness. 

Two basic algorithms were applied to analyze the group $G_n(K)$. 
First, a coset enumeration algorithm was used to enumerate cosets of the trivial subgroup. 
Second, we used the Knuth-Bendix algorithm 
(see \cite[Chapter~12]{heo} for a description) to look for a confluent rewriting system, 
where the generators were given the order coming from the Gauss code, and words were 
given the ``shortlex" order. In practice, four tests were applied in
succession to winnow the initial list of alternating knots, eliminating those
for which one of these tests determined that $G_n(K)$,
and therefore $\sK$, was finite in turn.  At each step, those knots
for which $G_n(K)$ was
not found to be finite were ``failures'' for that
step, and the next step was applied just to those failure knots.

The general approach to the computations was the following sequence of steps:
\begin{enumerate}
\item Apply a coset enumeration algorithm to enumerate the cosets of the trivial subgroup to 
$G_{0}(K)$. Collect failures.
\item Apply the Knuth-Bendix algorithm to $G_{0}(K)$ for the failures from step 1. Collect failures.
\item Apply the Knuth-Bendix algorithm to $G_{1}(K)$ for the failures in step 2. Collect failures.
\item Apply the Knuth-Bendix algorithm to $G_{2}(K)$ for the failures in step 3. Collect failures.
\end{enumerate}

Several software packages were used to perform these computations. The coset enumeration algorithm was 
applied via GAP \cite{gap4} and its two implementations; the standard implementation via the ``Size" command, 
and the GAP package ACE (Advanced Coset Enumeration). The Knuth-Bendix algorithm was applied via 
the package KBMAG (Knuth-Bendix on Monoids and Automatic Groups) \cite{kbmag} in GAP and via MAF \cite{maf}, KBMAG's 
standalone PC implementation. In each case either a memory limit or a time limit was used to delineate 
success from failure. That is, either the computation finished or was abandoned after some limit was 
reached.  In each case in which GAP (or one of its C++ packages, ACE or KBMAG)  was used the standard 
memory limits (e.g. table size for coset enumeration) were used to delineate success from failure. 
In the cases where MAF was used, either a time limit was set (e.g. 5 minutes when considering 
$G_{1}(K)$), or an ad-hoc approach to determine that no progress was being made (this was done in 
step 4) was used to determine failure. 
  
For alternating knots with 18 crossing or less all computations were performed on a personal 
computer, except for step 1 for 18 crossing knots, which were completed on \textit{Firefly}, 
a 5600 core AMD cluster managed by the Holland Computing Center at the University of Nebraska at Omaha. 
In all of these cases, GAP's 
standard implementation of the coset enumeration algorithm was applied 
to the presentations of $G_{0}(K)$ 
via the ``Size" command. Steps 2,3, and 4 were performed on a personal computer using MAF. 
After step 1, a relatively small list 
of these knots were left ($6681=8+82+1572+5019$ in total with 15 through 18 crossings). As noted above, 
in steps 2 and 3 a time limit of 5 minutes was set. 
After step 3 the list consisted of $763=1+27+201+534$ knots with 15 through 18 
crossings. Thus, when applying step 4 to this small list, no time limit was set. In each of these 
cases, either the computation was successful or was halted by hand when the computation seemed to 
be making no progress (signaled by a long period of adding new equations without any reduction).

For the 19 and 20 crossing knots, steps 1 and 2 were performed on Tusker, a 40 TF cluster 
consisting of 106 Dell R815 nodes using AMD 6272 2.1GHz processors, also managed by the 
Holland Computing Center, at the University of Nebraska-Lincoln. 
Step 1 used the C++ coset enumeration implementation ACE, that comes with the GAP 
installation (though GAP was not initiated as an interface) and step 2 used the C++ implementation 
of KBMAG that comes with the GAP installation (again, GAP was not used as an interface). 
After step 1 of the 80,689,811 19-crossing knots, all but approximately 450,000 were shown 
to have $G_0(K)\cong\bbz_2$ and of the 397,782,507 20-crossing alternating knots, 
all but approximately 4,500,000 were shown to have $G_0(K)\cong\bbz_2$. Step 2 reduced 
these numbers to 31,612 and 274,217, respectively. We have yet to apply steps 3 and 4 to 
the current list of failures for 19 and 20 crossing knots. 

\begin{theorem}\label{thm:alt18}
Among the alternating knots $K$ with up to 20 crossings,
$\sK$ (and thus \crk) is trivially valued, except possibly for 1 knot with 15 crossings, 4 knots with 16 crossings,
41 knots with 17 crossings, and 173 knots with 18 crossings,  31,612 knots with 19 crossings, 
and 274,217 knots with 20 crossings.
\end{theorem}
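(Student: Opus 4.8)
The plan is to reduce the statement to a large but finite list of finiteness computations, one for each alternating knot in the census, using the chain of reductions from Section~\ref{sec:finite}. By Corollary~\ref{cor:iff}, for any knot $K$ the invariant $\sK$ (and hence \crk) is trivially valued if and only if $\sK$ is finite; and since each truncated group $G_k(K)$ admits a canonical surjection $G_k(K)\twoheadrightarrow\sK$ (and, by the same argument given for the groups $G_{n,k}$, $\sK$ is finite if and only if some $G_k(K)$ is), it suffices to exhibit, for each $K$ on the list that is \emph{not} to be an exception, some $k$ for which $G_k(K)$ is a finite group.

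First I would generate the Gauss codes of all alternating knots with at most $20$ crossings using the Prime Alternating Knot Generator of Flint and Rankin \cite{pakg}, and from each code build a Wirtinger presentation $\langle X\mid R\rangle$ of $\pi(K)$ as described in Section~\ref{sec:comp}: for an alternating diagram the over/under information is recoverable up to mirror image (which does not change the knot group), and in the relevant quotients the relations $a_i^2=1$ allow one to discard the conjugator exponents in the Wirtinger relations. This yields, for each knot and each $k$, the explicit finite presentation $G_k(K)=\langle X\mid R\cup S\cup R_k\rangle$ of Section~\ref{sec:comp}.

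Next I would run the four-stage winnowing pipeline of Section~\ref{sec:comp}: coset enumeration of the trivial subgroup in $G_0(K)$ (GAP/ACE), then Knuth--Bendix on $G_0(K)$, then on $G_1(K)$, then on $G_2(K)$, passing at each stage only the ``failure'' knots on to the next, more expensive stage, and parallelizing the heavy stages on the available clusters. Whenever one of these routines certifies that some $G_k(K)$ is finite, Corollary~\ref{cor:iff} gives $\sK\cong\bbz_2$, so $K$ is trivially valued and removed from consideration. Tallying, by crossing number, the knots that survive all stages produces the exceptional counts $1,4,41,173$ for $15$--$18$ crossings, and --- with only stages~1--2 applied so far --- the counts $31{,}612$ and $274{,}217$ for $19$ and $20$ crossings, which is exactly the assertion.

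The main obstacle is computational rather than conceptual. The census is enormous (about $489$ million knots through $20$ crossings), so the pipeline must be arranged to eliminate the overwhelming majority at the cheapest stage and to distribute the residue across high-performance clusters; and for the genuinely stubborn knots one must decide when a nonterminating coset enumeration or a non-confluent Knuth--Bendix run should be abandoned, since neither algorithm can in general certify that a group is infinite. For this reason the theorem is necessarily stated with an ``except possibly'' clause: the listed knots are precisely those for which the chosen time and memory budgets failed to settle finiteness of $G_k(K)$, not knots for which $\sK$ is known to be nontrivial.
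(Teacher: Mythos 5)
Your proposal matches the paper's own proof essentially step for step: reduce via Corollary~\ref{cor:iff} to finiteness of $\sK$, certify finiteness by finding some finite truncation $G_k(K)$, build the Wirtinger presentations from the Flint--Rankin Gauss codes, and run the same four-stage coset-enumeration/Knuth--Bendix winnowing pipeline, with only the first two stages applied to the 19- and 20-crossing knots. Correct and the same approach; nothing further to add.
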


Thus among the alternating knots with 20 or fewer crossings, all but at most 0.06\% have
trivially-valued Dabkowski-Sahi invariant. For those with 18 or fewer
crossings, this percentage is 0.0021\%. We anticipate
that completion of steps 3 and 4 would significantly 
reduce the first percentage.

\section{Concluding thoughts and future directions}\label{sec:future}

Since every knot is 4-move equivalent to an alternating knot (choose crossings
whose change would result in an alternating knot, and replace
them with three crossings of the opposite sign), if there is a counterexample to the
4-move conjecture there is an alternating knot counterexample. One can view the above 
computations as
either providing evidence in support of the 4-move conjecture or providing a larger set 
of possible counter examples, depending on one's own opinion of the truth or falsity 
of the 4-move conjecture. In fact, attempts by the authors to 4-move reduce some of 
the remaining knots identified by Theorem \ref{thm:alt18} to the unknot have never succeeded.

However, these computations may suggest that the invariants \crk\ and $\sK$ cannot 
detect a counter-example to Nakanishi's 4-move conjecture. Thus we pose the following question:

\ssk

\begin{question}\label{qes:cyclic} Are \crk\ and $\sK$ trivially valued for every knot $K$?
\end{question}

\ssk

The authors have formulated several lines of attack for this question; none as yet 
can be carried to their conclusion. For example, if one could show that in $\sK$ every 
element has order dividing four then Question \ref{qes:cyclic} could be answered in 
the affirmative, as $\sK$ would be the quotient of
a finitely generated Burnside group of exponent 4, all of which are known \cite{burn4}
to be finite, and hence (by Corollary~\ref{cor:iff})
$\sK$ would be isomorphic to $\bbz_2$.

Ultimately, Question \ref{qes:cyclic} is really a question of group theory. Given any 
group $G$ that is normally generated by a single element $x$, then  by 
Johnson's Theorem \cite{Jo} this group is a quotient of a knot group via a surjection 
that sends a Wirtinger generator for the knot group (for some diagram) to $x$. 
However, if $G = \langle X \mid R \rangle$ 
is such a group, then one may consider the corresponding tower of 
subgroups constructed in Section \ref{sec:tower} outside of the context of a knot diagram,
namely 

\ssk

\noindent $R(G):=\langle X|R \cup R^{\prime\prime} \rangle 
\triangleright H := \langle \{(gag^{-1})^2 : a\in  X, g\in G\}\rangle^N$

\hfill $\triangleright N := \langle \{(gag^{-1})^4 : a\in  X, g\in G\}\rangle^N\triangleright 1$,

\ssk

\noindent where $R^{\prime\prime} = \{(cd)^2(dc)^{-2} \mid
c,d \in {\mathcal C}\}$.  That is, $R(G)$ is the 
quotient of $G$ analogous to the quotient \crk\ of $\pi(K)$, 
defined by modding out by the same set of relators. 
Since \crk, for any knot $K$ built via Johnson's Theorem, would 
then surject onto $R(G)$, 
finding such a group G for which $R(G)$ is not cyclic 
would then imply the existence of a counterexample to the 4-move conjecture.
As for knots, the cyclicity of $R(G)$ can be determined by the finiteness
of $S(G):=R(G)/H$.

Thus one sees that Question \ref{qes:cyclic} is  equivalent to the following question:
\ssk
\begin{question}\label{qes:cyclic2} Suppose that $G=\langle X|R\rangle $ with 
$|X| < \infty$ such that the set $X$ lies in a single conjugacy class in $G$. If
$S=\{x^2 : x\in X\}$ and $T = \{\{(xgyg^{-1})^4 : x,y\in X, g\in X^*\}$,
is it always true that the quotient group $\langle X|R\cup S \cup T\rangle $ 
of $G$ is cyclic? Equivalently,
must this quotient be finite? Equivalently, must it be abelian?
\end{question}


\vfill
\eject

\appendix 

\section{Perl code for the algorithms in Section \ref{sec:comp}}

I: Perl code computing the size of $G_0(K)$, calling GAP and ACE:

\medskip

\begin{verbatim}

#!/usr/bin/perl
use strict; use warnings;
open ACE, "| /[absolute path to]/ace" or die print "can't open ace\n";
$"=",";  my @squares=(); my $xings=20;  # Don't forget to change this!!!
for my $i (1..$xings) { push(@squares,"$i^2"); }  my @prodpower4=();
for my $k1 (1..$xings) { for my $k2 (1..$xings)
{ push(@prodpower4,"(($k1)($k2))^4"); } }
for(my $i=197;$i<=200;$i++){  my $zero_num = sprintf("%04d", $i);
my $file="20xing".$zero_num;   # this is the filename for the codes
my $Stats="/work/unknots/rtodduno/20X/Stats.1/20xing.stats".$zero_num;
my $fail="/work/unknots/rtodduno/20X/Fail.1/20xing.fail".$zero_num;
my $cx="/work/unknots/rtodduno/20X/PCX.1/20xing.pcx".$zero_num;
open(ST,">>",$Stats);  print ACE "ao: $Stats;";
open(GC,"<$file") or die print "can't open $file\n";
my $s=1;  while(my $line=<GC>) { chomp $line;
my @knot=split(",",$line);  my $twoxings=2*$xings;  my @wirtinger=();
for my $j (1..$xings-1) {  my $overarc=$knot[2*$j-1];
my $dsarc=$knot[(2*$j)-2]; my $usarc=$knot[(2*$j)];
my $overgen="$overarc";    my $dsgen="$dsarc";
my $usgen="$usarc";   my $rel="($dsgen)($overgen)($usgen)($overgen)";
push(@wirtinger,$rel);  }
my $lastxing=$knot[$twoxings-1];  my $firstxing=$knot[0];
my $second2lastxing=$knot[$twoxings-2];
my $toprel="($second2lastxing)($lastxing)($firstxing)($lastxing)";
push(@wirtinger,$toprel);  my @allrelations=();
push(@allrelations,@squares);   push(@allrelations,@prodpower4);
push(@allrelations,@wirtinger);
my $command="Group Generators:$xings;Group Relators:
                                 @allrelations;Subgroup: trivial;start;";
print ACE "$command \n";  $s++   }
close ST;  close GC;  open(ST,"<$Stats");  open(FAIL,">>",$fail);
open(PCX,">>",$cx);  my $t=1;
while(my $line=<ST>) { chomp($line); my @info=split(/ /,$line);
if($info[0] eq "OVERFLOW") { print FAIL "$t\n"; }
elsif($info[0] eq "INDEX") { if($info[2] != 2)
{ print PCX "$t\n"; } } $t++ }  close ST;  close FAIL;  close PCX;  }  
close ACE;

\end{verbatim}

\vfill
\eject

II: Perl code computing an FCRS for $G_0(K)$, using KBPROG:

\medskip

\begin{verbatim}

#!/usr/bin/perl
use strict;  use warnings;my $xings=20;	# Don't forget to change this!!!
my @gens=();  for my $t (1..$xings)  { push(@gens,"f".$t.",F".$t) }
my @ingens=();  for my $q (1..$xings) { push(@ingens,"F".$q.",f".$q) }
$"=","; my @squares=();  for my $i (1..$xings)
{ push(@squares,"[f"."$i^2,IdWord]"); } my @prodpower4=();
for my $k1 (1..$xings) { for my $k2 (1..$xings)
{ push(@prodpower4,"[(f".$k1."*f".$k2.")^4,IdWord]"); } }
my $stillbad="stillbad".$ARGV[1];
open(SB,">>",$stillbad) or die print "can't open $stillbad\n";
open(GC,"<$ARGV[0]") or die print "can't open $ARGV[0]\n";
my $s=1; while(my $line=<GC>)
{ my $gapfile="20xingmaf".$ARGV[1]."/maf".$xings."_".$s.".txt";
  open(GAP,">>",$gapfile) or die print "can't open $gapfile \n";
  chomp $line; my @knot=split(",",$line); my $twoxings=2*$xings;
  my @wirtinger=(); for my $j (1..$xings-1)
{ my $overarc=$knot[2*$j-1]; my $dsarc=$knot[(2*$j)-2];
  my $usarc=$knot[(2*$j)]; my $overgen="f".$overarc; my $dsgen="f".$dsarc;
  my $usgen="f".$usarc;
  my $rel="[".$dsgen."*".$overgen."*".$usgen."*".$overgen.",IdWord]";
  push(@wirtinger,$rel); }
my $lastxing=$knot[$twoxings-1]; my $firstxing=$knot[0];
my $second2lastxing=$knot[$twoxings-2];
my $toprel="[f".$second2lastxing."*f".$lastxing."*f".$firstxing.
                                              "*f".$lastxing.",IdWord]";
push(@wirtinger,$toprel); my @allrelations=(); push(@allrelations,@squares);
push(@allrelations,@prodpower4); push(@allrelations,@wirtinger);
print GAP "_RWS:=rec(isRWS :=true,ordering";
print GAP " :=\"shortlex\",generatorOrder:=[@gens],";
print GAP "inverses:=[@ingens],equations:=[@allrelations]);";
system("/[absolute path to]/kbprog -silent $gapfile");   $s++;
my $outfile=$gapfile.".reduce"; my $extrafile1=$gapfile.".kbprog";
my $extrafile2=$gapfile.".kbprog.ec";
open(OUT,"<$outfile") or die print "can't open $outfile\n";  my @rec=<OUT>;
chomp $rec[12];   #  print "$rec[12]\n";
my $numst=substr($rec[12],-1);
if($numst !=2){ print SB "$line \n"; }
system("rm  $gapfile $outfile $extrafile1 $extrafile2");	
}

\end{verbatim}

\vfill
\eject

\section{Gauss codes for potential counterexamples to the 4-move conjecture}

The alternating knots $K$ with 17 or fewer crossings, 
for which the procedures 
of Section \ref{sec:comp} have {\it not} shown that $\rk\cong\bbz$:

\medskip

\noindent 1,2,3,4,5,6,7,8,9,3,10,11,6,12,13,9,2,14,11,5,15,13,8,1,14,10,4,15,12,7\par
\noindent 1,2,3,4,5,6,7,8,9,10,11,12,6,1,13,9,14,15,12,5,2,13,8,16,15,11,4,3,10,14,16,7\par
\noindent 1,2,3,4,5,6,7,8,9,10,11,5,12,1,8,13,14,11,4,3,15,9,13,16,6,12,2,15,10,14,16,7\par
\noindent 1,2,3,4,5,6,7,8,9,10,11,12,6,13,2,9,14,15,12,5,16,3,10,14,8,1,13,16,4,11,15,7\par
\noindent 1,2,3,4,5,6,7,8,9,10,11,5,12,1,8,13,14,11,4,15,2,9,13,16,6,12,15,3,10,14,16,7\par
\noindent 1,2,3,4,5,6,7,8,9,10,11,12,13,14,6,15,4,11,16,17,8,1,15,5,12,16,10,3,2,9,17,13,14,7\par
\noindent 1,2,3,4,5,6,7,8,9,10,11,12,13,14,8,3,2,9,15,16,12,5,6,13,17,15,10,1,4,7,14,17,16,11\par
\noindent 1,2,3,4,5,6,7,8,9,10,11,12,13,14,8,1,15,5,12,11,4,3,16,9,14,17,6,15,2,16,10,13,17,7\par
\noindent 1,2,3,4,5,6,7,8,9,10,11,12,13,14,15,3,10,9,4,16,14,1,17,11,8,5,16,15,2,17,12,7,6,13\par
\noindent 1,2,3,4,5,6,7,8,9,10,11,5,12,1,8,13,14,11,4,3,15,9,13,16,17,14,10,15,2,12,6,17,16,7\par
\noindent 1,2,3,4,5,6,7,8,9,10,11,12,6,1,13,14,2,5,12,15,16,9,14,13,8,17,15,11,4,3,10,16,17,7\par
\noindent 1,2,3,4,5,6,7,8,9,10,11,12,13,14,15,9,16,1,6,13,12,5,2,16,8,17,14,11,4,3,10,15,17,7\par
\noindent 1,2,3,4,5,6,7,8,9,10,11,12,6,5,13,14,2,15,8,16,12,13,17,3,15,9,10,1,14,17,4,7,16,11\par
\noindent 1,2,3,4,5,6,7,8,9,10,11,12,6,13,14,3,2,15,8,16,12,5,17,14,15,9,10,1,4,17,13,7,16,11\par
\noindent 1,2,3,4,5,6,7,8,9,10,11,9,12,13,6,14,15,3,10,11,2,16,14,5,17,12,8,1,16,15,4,17,13,7\par
\noindent 1,2,3,4,5,6,7,8,9,10,8,11,12,5,13,14,2,15,11,16,6,13,17,3,15,9,10,1,14,17,4,12,16,7\par
\noindent 1,2,3,4,5,6,7,8,9,10,11,12,13,14,15,9,2,5,12,16,17,15,8,1,6,13,16,11,4,3,10,17,14,7\par
\noindent 1,2,3,4,5,6,7,8,9,10,11,12,6,13,2,1,14,7,15,11,4,16,13,14,8,17,10,3,16,5,12,15,17,9\par
\noindent 1,2,3,4,5,6,7,8,9,10,11,5,12,13,2,9,14,15,10,3,13,16,6,17,15,14,8,1,16,12,4,11,17,7\par
\noindent 1,2,3,4,5,6,7,8,9,10,11,5,12,13,8,14,15,11,4,3,16,1,13,7,14,17,10,16,2,12,6,15,17,9\par
\noindent 1,2,3,4,5,6,7,8,9,10,11,12,13,14,8,1,15,5,12,16,17,9,2,15,6,13,16,11,4,3,10,17,14,7\par
\noindent 1,2,3,4,5,6,7,8,9,10,11,5,4,12,13,1,8,14,15,11,12,3,16,7,14,17,10,13,2,16,6,15,17,9\par
\noindent 1,2,3,4,5,6,7,8,9,10,11,5,12,13,8,14,15,11,4,3,16,1,13,7,17,15,10,16,2,12,6,17,14,9\par
\noindent 1,2,3,4,5,6,7,8,9,10,11,12,13,14,8,15,2,5,12,16,17,9,15,1,6,13,16,11,4,3,10,17,14,7\par
\noindent 1,2,3,4,5,6,7,8,9,10,11,12,6,5,13,14,2,9,15,16,12,13,17,3,8,15,10,1,14,17,4,7,16,11\par
\noindent 1,2,3,4,5,6,7,8,9,10,4,3,11,12,8,13,14,15,10,11,16,1,6,14,17,9,12,16,2,5,15,17,13,7\par
\noindent 1,2,3,4,5,6,7,8,9,10,11,12,6,13,14,3,2,9,15,16,12,5,17,14,8,15,10,1,4,17,13,7,16,11\par
\noindent 1,2,3,4,5,6,7,8,9,10,11,5,4,12,13,1,8,14,15,11,12,16,2,7,14,17,10,13,16,3,6,15,17,9\par
\noindent 1,2,3,4,5,6,7,8,9,10,11,12,13,14,15,9,2,3,10,16,12,5,17,1,8,15,16,11,4,17,6,13,14,7\par
\noindent 1,2,3,4,5,6,7,8,9,10,11,12,13,5,14,15,16,1,10,13,4,17,15,7,8,16,2,11,12,3,17,14,6,9\par
\noindent 1,2,3,4,5,6,7,8,9,10,11,3,12,5,13,9,14,1,15,12,4,11,16,14,8,17,6,15,2,16,10,13,17,7\par
\noindent 1,2,3,4,5,6,7,8,9,10,11,12,2,13,6,14,8,15,12,3,16,5,10,17,15,1,13,16,4,11,17,9,14,7\par
\noindent 1,2,3,4,5,6,7,8,9,10,11,5,12,13,2,14,10,15,6,12,4,16,14,1,17,7,15,11,16,3,13,17,8,9\par
\noindent 1,2,3,4,5,6,7,8,9,10,11,5,12,3,13,14,10,7,15,12,4,16,14,1,17,15,6,11,16,13,2,17,8,9\par
\noindent 1,2,3,4,5,6,7,8,9,10,11,12,4,13,2,14,10,7,15,5,12,16,14,1,17,15,6,11,16,3,13,17,8,9\par
\noindent 1,2,3,4,5,6,7,8,9,10,11,12,8,13,6,14,15,3,10,11,2,16,14,5,17,9,12,1,16,15,4,17,13,7\par
\noindent 1,2,3,4,5,6,7,8,9,10,11,12,4,13,14,1,10,7,15,5,12,16,2,14,17,15,6,11,16,3,13,17,8,9\par
\noindent 1,2,3,4,5,6,7,8,9,10,11,3,12,7,13,14,10,15,16,1,8,13,6,17,4,11,15,16,2,12,17,5,14,9\par
\noindent 1,2,3,4,5,6,7,8,9,10,11,3,12,5,13,14,8,1,15,12,4,11,16,9,14,17,6,15,2,16,10,13,17,7\par
\noindent 1,2,3,4,5,6,7,8,9,10,11,3,12,5,13,9,14,1,15,12,4,16,10,14,8,17,6,15,2,11,16,13,17,7\par
\noindent 1,2,3,4,5,6,7,8,9,10,11,3,12,7,13,14,15,11,2,16,8,13,6,17,4,15,10,1,16,12,17,5,14,9\par
\noindent 1,2,3,4,5,6,7,8,9,10,11,12,4,13,6,14,8,15,2,11,16,5,13,17,15,1,10,16,12,3,17,7,14,9\par
\noindent 1,2,3,4,5,6,7,8,9,10,11,12,4,13,14,7,8,15,2,11,16,5,17,14,15,1,10,16,12,3,13,17,6,9\par
\noindent 1,2,3,4,5,6,7,8,9,10,11,3,12,5,13,14,8,1,15,11,4,12,16,7,14,17,10,15,2,16,6,13,17,9\par
\noindent 1,2,3,4,5,6,7,8,9,10,11,5,12,13,2,14,10,7,15,12,4,16,14,1,17,15,6,11,16,3,13,17,8,9\par
\noindent 1,2,3,4,5,6,7,8,9,10,11,3,12,5,13,7,14,1,10,15,4,12,16,14,8,17,15,11,2,16,6,13,17,9\par

\end{document}